\newtheorem{thm}{Theorem}[section]
\newtheorem{prop}[thm]{Proposition}
\newtheorem{lem}[thm]{Lemma}
\newtheorem{cor}[thm]{Corollary}
\newtheorem{conj}[thm]{Conjecture}
\theoremstyle{definition}
\newtheorem{defn}[thm]{Definition}
\theoremstyle{remark}
\newtheorem{remark}[thm]{Remark}
\numberwithin{equation}{section}
\newcommand{\F}{\mathbb{F}}
\newcommand{\Z}{\mathbb{Z}}
\newcommand{\Q}{\mathbb{Q}}
\newcommand{\Fn}{\mathbb{F}_{q^n}}
\newcommand{\Aut}{\operatorname{Aut}}
\newcommand{\End}{\operatorname{End}}
\newcommand{\Ker}{\operatorname{Ker}}
\newcommand{\rvline}{\hspace*{-\arraycolsep}\vline\hspace*{-\arraycolsep}}
\numberwithin{equation}{section}
\begin{document}


\baselineskip=17pt


\title{Isogeny Classes of Split Almost Ordinary Abelian Surfaces over Finite Fields}

\author{Yu Fu}

\date{}
\maketitle
\begin{abstract}
Let $A=E \times E_{ss}$ be a principally polarized almost ordinary split abelian surface over a finite field $\mathbb{F}_{q}$. We give asymptotic upper and lower bounds on the number of principally polarized abelian surfaces over $\mathbb{F}_{q^{n}}$ that are $\overline{\mathbb{F}}_{q}$-isogenous to $A$ up to isomorphism, which is a refinement of the results in \cite{AH17}.
\end{abstract}

\renewcommand{\thefootnote}{}

\footnote{2020 \emph{Mathematics Subject Classification}: Primary 14K10; Secondary 14K02.}

\footnote{\emph{Key words and phrases}: abelian varieties, isogeny class, finite fields.}

\renewcommand{\thefootnote}{\arabic{footnote}}
\setcounter{footnote}{0}

\section{Introduction}
Many fundamental problems on Shimura varieties pertain to the behavior of isogeny classes, for example, the Hecke orbit conjecture and specific questions related to unlikely intersections. In \cite[Theorem 4.1]{ST18}, Shankar and Tsimerman proved an asymptotic formula for the size of the isogeny class of ordinary elliptic curves over finite fields. As an application, they proved the existence of a hypersurface in the moduli space $X(1)^{270}$, which intersects every isogeny class. 

A few common strategies exist to obtain asymptotic formulas for the size of isogeny classes of abelian varieties over finite fields. In particular, when the abelian variety is ordinary and simple, the inspiring work of Deligne \cite{De69} explicitly classified such abelian varieties over finite fields. Using the classification, one can get bounds for the isogeny classes of ordinary abelian varieties, for example, \cite[Theorem 3.3]{ST18}. A handful of studies in this flavor have been performed in more general settings. For example, one may refer to \cite{OS20} when the abelian variety is almost-ordinary and geometrically simple and to \cite{BF23} for a setting of Hilbert modular varieties. All of the results above depend on the existence of canonical lifting and classification of abelian varieties over finite fields.
 
A second way of doing this is to interpret isogeny classes in terms of orbital integrals. For example, in \cite{AC02}, Achter and Cunningham proved an explicit formula for the size of the isogeny class of a Hilbert-Blumenthal abelian variety over a finite field. They express the size of the isogeny class as a product of local orbital integrals on $GL(2)$ and then evaluate all the relevant orbital integrals. See also \cite{AW14} where Achter and Williams proved that for a particular class of simple, ordinary abelian surfaces over $\F_{q}$ given by a $q$-weil polynomial $f$, the number of principally polarized abelian surfaces over $\F_{q}$ with Weil polynomial $f$ could be calculated by an infinite product of local factors which can be calculated by method of orbital integrals.

 Throughout this article, let $(A, \lambda_{A})$ be a principally polarized non-simple abelian surface defined over $\F_{q}$, with the polarization given by $\lambda_{A}$. Moreover, assume that $A$ has the form $A=E \times E_{ss}$, where $E$ is an ordinary elliptic curve and $E_{ss}$ is a supersingular elliptic curve. The endomorphism algebra $\End^{\circ}(A)$ is non-commutative, and there is no canonical lifting of $A$. Therefore, we cannot interpret the question as estimating the size of class groups by using the classification of abelian varieties over finite fields. Instead, we present a proof of the lower bound of the isogeny class of $A$ defined over $\F_{q}$ and describe how this cardinality is affected by the base change to finite extensions of $\mathbb{F}_{q}$ by using group-theoretical methods. As pointed out by the referee, there is another way to prove the fact, especially to approach the case when $n$ is odd and for the upper bound, using a different perspective based on the work of Frey and Kani \cite{FK91} and Achter and Howe \cite{AH17}. See Section 3.4.2 and Appendix A for details about this method.

 Before introducing the main theorem, we introduce some notation. Let $I(q^{n},A)$ be the set of principally polarized abelian varieties defined over $\mathbb{F}_{q^{n}}$ that are isogenous to $A$ over $\overline{\mathbb{F}}_{q}$. Let $N(q^{n},A)$ denote the cardinality of $I(q^{n},A)$. 
 By interpreting the question as a classification of finite subgroup schemes, we obtain a lower bound on the number of principally polarized abelian varieties over $\mathbb{F}_{q^{n}}$ that is isogenous to $A$ over $\overline{\mathbb{F}}_{q}$. Our main result is the following.
 
\begin{thm}\label{main}
Let $(A,\lambda_{A})$ be a principally polarized abelian variety over $\F_{q}$ such that $A=E\times E_{ss}$. Let $K$ be the quadratic number field such that $K=\End^{\circ}(E)$. Suppose $n$ satisfy one of the following conditions
\begin{itemize}
    \item $n$ is even and $n$ is an integer coprime to the discriminant of $\mathcal{O}_K$. 
    \item  $n$ is odd and $-1$ is a quadratic residue modulo the trace of Frobenius of $E.$
\end{itemize}
 Then $$N(q^{n},A) \ge q^{n + o(n)}.$$
 Moreover, for all positive integer $n$, we have  $$N(q^{n},A) \le q^{n + o(n)}.$$
\end{thm}

We note that the lower bound in \cite[Theorem 1.2]{AH17} came from estimating the number of \textit{ordinary split non-isotypic surfaces}, which are abelian surfaces isogenous to a product $E_1 \times E_2$ of two ordinary elliptic curves, with $E_1$ and $E_2$ lying in two different isogeny classes, and the author did not specify the counting problem in a fixed isogeny class throughout the paper. Therefore, our theorem, in particular the lower bound, is a refinement of their results.

In addition, we provide an approach to count the size of isogeny classes of ordinary elliptic curves over finite fields, the upper bound of which is known by Lenstra \cite[Proposition 1.19]{Len} and Shankar-Tsimerman \cite[Theorem 3.3]{ST18}, using a different perspective. Due to \cite[Lemma 5.4]{BF23}, we can generalize \cite[Theorem 3.3]{ST18} to all but finitely many positive integers $n$.

\begin{thm}\label{ordinarythm}
	Let $E$ be an ordinary elliptic curve defined over $\F_{q}$. For all but finitely many positive integers $n$, we have
	$$N(q^n, E)= (q^n)^{1/2 + o(1)}.$$
\end{thm}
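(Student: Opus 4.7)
The plan is to interpret $N(q^n, E)$ via Deligne's classification as a sum of class numbers of orders in $K := \End^{\circ}(E)$, then extract a single summand whose size can be bounded below. Let $\pi \in \mathcal{O}_K$ denote the Frobenius of $E$, so $\pi\bar\pi = q$. An ordinary elliptic curve $E'$ over $\F_{q^n}$ is $\bar{\F}_q$-isogenous to $E$ if and only if $\End^{\circ}(E') \cong K$, and by Deligne--Honda--Tate such curves are classified by pairs $(\pi_n, [\mathfrak{a}])$, where $\pi_n$ is a Weil $q^n$-integer in $\mathcal{O}_K$ (up to complex conjugation) and $[\mathfrak{a}]$ is an invertible module class over some order $\mathcal{O}$ with $\Z[\pi_n] \subseteq \mathcal{O} \subseteq \mathcal{O}_K$. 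Thus
\[
N(q^n, E) \;=\; \sum_{\pi_n}\sum_{\mathcal{O}\supseteq\Z[\pi_n]} h(\mathcal{O}) \;\geq\; h(\Z[\pi^n]),
\]
where the inequality retains only the term with $\pi_n = \pi^n$ and $\mathcal{O} = \Z[\pi^n]$. Since the matching upper bound $N(q^n, E) \leq (q^n)^{1/2+o(1)}$ is already supplied by Lenstra and Shankar--Tsimerman, it suffices to prove $h(\Z[\pi^n]) \geq (q^n)^{1/2 + o(1)}$ for a positive density of $n$.

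I would then translate $h(\Z[\pi^n])$ into a quantity governed by the angular distribution of the Frobenius. Writing $\pi = q^{1/2}\, e^{i\theta}$, the discriminant of $\Z[\pi^n]$ equals $-4 q^n \sin^2(n\theta)$, and this factors as $f_n^2\, d_K$, where $f_n$ is the conductor of $\Z[\pi^n]$ in $\mathcal{O}_K$ and $d_K = \operatorname{disc}(K)$. The classical conductor-to-class-number formula gives
\[
h(\Z[\pi^n]) \;=\; \frac{h(\mathcal{O}_K)\, f_n}{[\mathcal{O}_K^\times : \Z[\pi^n]^\times]}\prod_{p \mid f_n}\!\left(1 - \tfrac{\chi_K(p)}{p}\right),
\]
with $\chi_K$ the quadratic character of $K$. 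The lower bound thus reduces to proving, on a positive-density set of $n$, that (i) $f_n \geq (q^n)^{1/2 - o(1)}$, and (ii) $\prod_{p \mid f_n}(1 - \chi_K(p)/p) \geq f_n^{-o(1)}$.

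For (i), since $E$ is ordinary the ratio $\pi/\bar\pi = e^{2i\theta}$ has infinite order on the unit circle, so Weyl's equidistribution theorem, sharpened if needed by Baker's bound on linear forms in logarithms, gives $|\sin(n\theta)| \geq n^{-o(1)}$ on a density-one set of $n$; there $f_n = (q^n)^{1/2 + o(1)}$. For (ii), the pointwise bound $(1 - \chi_K(p)/p) \geq (1 - 1/p)$ reduces the product to $\phi(f_n)/f_n$, and a standard averaging argument shows $\phi(m)/m \geq c > 0$ on a positive density of integers $m$. The main obstacle is combining (i) and (ii) on a common positive-density set of $n$: equidistribution gives (i) at full density, so the density loss comes entirely from the divisor structure of the particular sequence $\{f_n\}$, and one must verify that this sequence inherits enough multiplicative genericity to force $\phi(f_n)/f_n$ to stay bounded below on a positive proportion of $n$. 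This sieve/averaging step is the technical heart of the argument, and the source of the passage from ``all $n$'' to ``positive density of $n$''.
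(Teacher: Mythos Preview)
Your approach is correct, and in fact the worry you flag as the ``technical heart'' dissolves on inspection. You only need $\prod_{p\mid f_n}(1-\chi_K(p)/p)\geq f_n^{-o(1)}$, and since each factor is at least $1-1/p$, Mertens' theorem gives $\prod_{p\mid m}(1-1/p)=\phi(m)/m\geq c/\log\log m$ for \emph{every} $m\geq 3$; as $\log\log f_n = O(\log n) = (q^n)^{o(1)}$, condition (ii) holds unconditionally. No averaging over the sequence $\{f_n\}$ is required, and combined with (i), which you correctly obtain on a density-one set via equidistribution of $n\theta$, the lower bound follows for a density-one set of $n$, stronger than stated.

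Your route is, however, genuinely different from the paper's. You pass through Deligne's classification to express $N(q^n,E)$ as a sum of class numbers and then estimate $h(\Z[\pi^n])$ via the conductor formula; this is the classical analytic approach, close in spirit to what Lenstra and Shankar--Tsimerman already do. The paper deliberately avoids this machinery. Instead it counts cyclic subgroups of $E$ stabilized by Frobenius directly: the key observation is that $\pi^n$ acts as a scalar on $E[\ell^m]$ precisely when $\ell$ is unramified in $K$ and $\ell^{2m}\mid\Delta_{\pi^n}$ (Lemma~\ref{2m}), so every $\ell^m$-subgroup is then defined over $\F_{q^n}$; Waterhouse's criterion (Theorem~\ref{Waterhouse}) together with the divisor bound then controls how many such subgroups give isomorphic quotients. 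The final numerics agree, since $\prod_i\ell_i^{m_i}\asymp\Delta_{\pi^n}^{1/2}\asymp q^{n/2}$, but the mechanisms are distinct. The payoff of the paper's subgroup-counting method is that it transports to the main object of interest, the non-simple surface $A=E\times E_{ss}$, where $\End^\circ(A)$ is non-commutative, no canonical lifting exists, and the Deligne/class-number framework you rely on is unavailable. Your argument gives a cleaner and sharper proof in the elliptic-curve case but does not extend to Theorem~\ref{main}.
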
 

\vspace{1em}
There is a general conjecture regarding the size of the isogeny class of abelian varieties over finite fields. Let $N(W)$ be the open Newton stratum of $\mathcal{A}_{g}$ consisting of all abelian varieties whose Newton polygon is $W$ and let $A$ be a principally polarized abelian variety in $\mathcal{A}_{g}$. Recall that the \textit{central leaf} through $A$ consists of all abelian varieties in $N(W)$ whose $p$-divisible group is isomorphic to $A[p^{\infty}]$. The \textit{isogeny leaf} through $A$ is a maximal irreducible subscheme of $\mathcal{A}_{g}$ consisting of abelian varieties $A^{\prime}$ in $N(W)$ such that $A^{\prime}$ is isogenous to $A$ through an isogeny whose kernel is an iteration extension of the group scheme $\alpha_{p}$. Let $\dim(CL)$ be the dimension of the central leaf through $A$ and let $\dim(IL)$ be the dimension of the isogeny leaf through $A$.

\begin{conj}\label{conjecture}
	We have $$N(q^{n},A)=q^{n(\frac{\dim(CL)}{2}+ \dim(IL)) + o(1)}.$$ 
\end{conj}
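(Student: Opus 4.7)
The plan is to combine Oort's foliation of the Newton stratum with a two-step point count. Any principally polarized abelian variety in $I(q^{n}, A)$ lies in the same open Newton stratum $N(W)$ as $A$, and by the Oort--Chai almost product structure theorem $N(W)$ is, in a precise sense, locally isogenous to the product of the central leaf $CL$ through $A$ with the isogeny leaf $IL$ through $A$. One therefore expects an asymptotic factorization
$$N(q^{n},A) \asymp \#CL(\mathbb{F}_{q^{n}}) \cdot \#IL(\mathbb{F}_{q^{n}})$$
up to bounded error, and the two factors can be estimated separately. Specializing to the cases already in the paper gives a consistency check: for an ordinary elliptic curve one has $\dim(CL)=1$ and $\dim(IL)=0$, recovering Theorem \ref{ordinarythm}, while for $A=E\times E_{ss}$ one has $\dim(CL)=1$ and $\dim(IL)=1$, predicting $q^{3n/2+o(1)}$, which is stronger than the lower bound of Theorem \ref{main}.

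For the central leaf factor, the strategy is to extend the approach used for ordinary abelian varieties (Deligne's classification, as refined by Howe and by Shankar--Tsimerman) to $p$-divisible groups of a prescribed isomorphism class. The $\mathbb{F}_{q^{n}}$-points of $CL$ should be parametrized by ideal classes of an appropriate endomorphism order, and an effective Brauer--Siegel estimate would then yield growth of order $q^{n\dim(CL)/2+o(1)}$. The factor $1/2$ reflects the presence of a canonical Serre--Tate lifting along $CL$: the lifting converts the count into a class number problem for a CM-like algebra, and such class numbers grow like the square root of the discriminant.

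For the isogeny leaf factor, no such canonical lifting exists, and $IL$ is instead (a quotient of) a Rapoport--Zink moduli space of polarized $p$-divisible groups obtained from $A[p^{\infty}]$ by iterated $\alpha_{p}$-isogenies. One would count $\mathbb{F}_{q^{n}}$-points via the description of these spaces in terms of affine Deligne--Lusztig varieties (Kottwitz, Viehmann), which gives the expected $q^{n\dim(IL)+o(1)}$. The absence of a square-root saving here is exactly the absence of a canonical lift past the central leaf: at the $p$-adic level, the moduli problem is genuinely $\dim(IL)$-dimensional as a formal scheme over $W(\overline{\mathbb{F}}_{p})$, with no reduction to a zero-dimensional class number.

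The main obstacle will be imposing the principal polarization uniformly across the foliation. One needs a positive-density statement that a fixed proportion of the unpolarized isogeny class admits a principal polarization, a uniform bound on the number of non-isomorphic principal polarizations per abelian variety, and, most delicately, a compatibility between the symplectic form, the prime-to-$p$ Hecke translations generating the central leaf, and the $\alpha_{p}$-isogenies generating the isogeny leaf. The symplectic pairing interacts nontrivially with the Ekedahl--Oort stratification inside $IL$, so the naive factorization above must be corrected by an Ekedahl--Oort error term which one must show is only $q^{o(n)}$. A Langlands--Kottwitz style local-global decomposition into orbital integrals, generalizing the Hilbert--Blumenthal computation of Achter--Cunningham beyond the ordinary setting, appears to be the natural framework in which to control this interaction; carrying this out uniformly for every Newton polygon, and in particular handling the non-self-dual kernels that arise on an isogeny leaf, will be the heart of the argument.
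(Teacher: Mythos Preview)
The statement you are trying to prove is labeled in the paper as a \emph{Conjecture}, and the paper does not prove it. There is therefore no ``paper's own proof'' to compare against. After stating the conjecture, the paper only remarks that the previously known results are consistent with it, and then computes the two dimensions in the specific case $A=E\times E_{ss}$: there $\dim(CL)=2$ and $\dim(IL)=0$, so the conjecture predicts $N(q^{n},A)=q^{n+o(1)}$, which matches (as an equality, not merely a lower bound) the shape of Theorem~\ref{main}. Your outline is thus a program for attacking an open problem, not a proof to be checked against an existing one.

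Your sketch also contains a concrete error in the one case the paper does settle. You claim that for $A=E\times E_{ss}$ one has $\dim(CL)=1$ and $\dim(IL)=1$, giving a prediction of $q^{3n/2+o(1)}$. This is wrong: since $E$ is ordinary it contains no copy of $\alpha_{p}$, so the only $\alpha_{p}$ in $A$ sits inside $E_{ss}$, and quotienting by it again yields a product of an ordinary and a supersingular elliptic curve. The isogeny leaf is therefore zero-dimensional, and correspondingly the central leaf has dimension $2$ (the paper cites the lattice-point formula of Shankar--Tsimerman for this). With the correct values the conjecture gives $q^{n+o(1)}$, and your ``consistency check'' with Theorem~\ref{main} should read as an exact match rather than as Theorem~\ref{main} being weaker than the prediction. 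This miscomputation suggests you are identifying the central leaf with the ordinary factor alone rather than working inside $\mathcal{A}_{2}$; you should revisit the definition of the foliation before building a general argument on it.
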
 

All the previous results we state above satisfy the Conjecture \ref{conjecture}. When $A$ is a non-simple abelian surface, it is easy to see that the dimension of the central leaf through $A$ is $2$, by the formula of lattice-point count by Shankar and Tsimerman \cite[Section 5.2]{ST18}. The dimension of the isogeny leaf through $A$ is $0$. Therefore the conjecture is true in this case.

\section{The lower bound: from the group theoretical perspective}
\subsection{The Isogeny Classes and Maximal Isotropic Subgroups}

A classical way to construct abelian varieties isogenous to a fixed abelian variety $A$ is to take quotients of $A$ by finite subgroup schemes. A theorem of Mumford \cite[II.7 Theorem 4]{Mum} addresses that one can construct isogenies from finite subgroups of an abelian variety and vice versa.

\begin{thm}\cite[II.7 Theorem 4]{Mum}
Let $X$ be an abelian variety. There is a one-to-one correspondence between the two sets of objects:

\begin{itemize}
	\item[(a)] finite subgroups $K \in X$,
	\item[(b)] separable isogenies $f: X \to Y$, where two isogenies $f_{1}: X \to Y_{1}$, $f_{2}: X \to Y_{2}$ are considered equal if there is an isomorphism $h: Y_{1} \to Y_{2}$ such that $f_{2}=h \circ f_{1}$, which is set up by $K=\operatorname{ker}(f)$, and $Y=X/K$.

\end{itemize}

\end{thm}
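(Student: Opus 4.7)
The plan is to establish the bijection by constructing the quotient $X/K$ in one direction, taking kernels in the other, and verifying that these operations are mutually inverse up to the equivalence in (b). Concretely, I would associate to a finite subgroup scheme $K \subset X$ the pair $(X/K,\,\pi)$ with $\pi$ the quotient morphism, and to a separable isogeny $f : X \to Y$ the kernel $K = \ker(f)$.

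To construct $X/K$, I would use that $K$ acts freely on $X$ by translation. For a finite flat group scheme acting freely on a quasi-projective scheme, the geometric quotient exists as a scheme, with $\pi : X \to X/K$ finite and faithfully flat. Because $K$ is normal in the commutative group scheme $X$, the group law of $X$ descends to $X/K$, making $\pi$ a homomorphism of group schemes. Smoothness and properness descend along $\pi$, so once projectivity is in hand $X/K$ is an abelian variety. By construction $\ker(\pi) = K$, and $\pi$ is separable precisely when $K$ is étale, which is the regime relevant to (b).

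For the converse, given a separable isogeny $f : X \to Y$, the kernel $K = \ker(f)$ is a finite étale subgroup scheme because $f$ is finite, flat, and separable. The universal property of $X/K$ then yields a unique morphism $h : X/K \to Y$ with $h \circ \pi = f$, and comparing degrees and kernels forces $h$ to be an isomorphism. This simultaneously shows that the two assignments are mutually inverse and encodes the equivalence relation in (b): any two separable isogenies out of $X$ with the same kernel differ by a unique isomorphism of their targets.

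The main obstacle is showing that $X/K$ is projective, which is the content of Mumford's descent argument. The strategy is to take a symmetric ample line bundle $L$ on $X$ and, using the theorem of the square to analyze the obstruction cocycle, to show that some high tensor power $L^{\otimes n}$ admits a $K$-linearization. Faithfully flat descent along $\pi$ then produces an ample line bundle on $X/K$, securing projectivity. Identifying when an invariant line bundle truly descends along $\pi$, as opposed to merely being $K$-equivariant on the cover, is the technically delicate heart of the proof.
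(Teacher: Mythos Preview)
The paper does not give its own proof of this theorem: it is quoted verbatim from Mumford and cited as \cite[II.7 Theorem 4]{Mum}, with no argument supplied. So there is nothing in the paper to compare your proposal against; the authors simply invoke the result as background.

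That said, your sketch is a faithful outline of Mumford's own argument. The construction of $X/K$ via the free translation action, the descent of the group law using normality of $K$, the identification of separability with \'etaleness of the kernel, and the use of the universal property of the quotient to establish the bijection are all exactly as in Mumford. Your identification of projectivity of $X/K$ as the crux, handled by descending a power of an ample line bundle, is also on target; Mumford does this via the theory of $K$-linearizations (in effect, trivializing the obstruction in the theta group), which is what your ``theorem of the square'' remark is gesturing at. If you were to flesh this out, the one place to be careful is that the descent datum on $L^{\otimes n}$ must satisfy the cocycle condition on $K \times K$, not merely be $K$-invariant; Mumford's argument handles this by analyzing the commutator pairing on the theta group, and your phrase ``analyze the obstruction cocycle'' is the right pointer but would need to be made precise.
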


\vspace{1em}
\noindent{\textbf{The maximal isotropic subgroups.}} In order to count the number of abelian varieties isogenous to $A$, a natural way is to look at all its finite subgroups $G \subset A$. Let $A[m]$ be the $m$-torsion subgroup of $A$ . when $p \nmid m$, $A[m]=(\mathbb{Z}/m\mathbb{Z})^{4}$. Without loss of generality, let $m$ be a prime integer. Recall that for a symplectic $F$-vector space $V$ equipped with the symplectuc bilinear form $\omega: V \times V \to F$, a subspace $H$ is called \textit{isotropic} if for any $h_1, h_2 \in H$, $\omega(h_{1},h_{2})=0$. It is a standard fact that for a symplectic vector space of dimension $2g$, each of the maximal isotropic subspaces is of dimension $g$.

\begin{defn}
Let $(A,\lambda_{A})$ be a principally polarized abelian surface, and $\ell$ be a prime such that $\ell \nmid p$. Define the \textit{$(\ell^{m}, \ell^{m})$-isogeny} to be any isogeny $f: A \to B$ whose kernel is a maximal isotropic subgroup of $A[\ell^{m}]$, with respect to the Weil paring induced by the polarization $\lambda_{A}$. 
\end{defn}

We claim that for an $(\ell^{m}, \ell^{m})$-isogeny $f: A \to B$, there is a unique principal polarization on $B$, denoted by $\lambda_{B}$, such that $f^{*}\lambda_{B}= \ell^{m} \times \lambda_{A}$. This is a consequence of Grothendieck's descent, and we omit the proof here. See \cite[Proposition 2.4.7]{Rob} for detailed proof. This fact allows us to compute a lower bound of $N(q^{n},A)$ first by counting the number of maximal isotropic subgroups of $A$ that are defined over $F_{q^{n}}$, then by computing the number of the subgroups that give the same quotient up to isomorphism.

\begin{lem}\label{counting max iso planes}
For $l \nmid p$, there are $$\ell^{3m}+\ell^{3m-1}+\ell^{3m-2}+l^{3m-3}$$ maximal isotropic subgroups of $A[\ell^{m}]$ with respect to the principal polarization $\lambda_{A}$.
\end{lem}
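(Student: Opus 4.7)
The plan is to realize $V := A[\ell^{m}](\overline{\mathbb{F}}_{q})$ as a free module of rank $4$ over $R := \mathbb{Z}/\ell^{m}\mathbb{Z}$ (possible because $\ell \neq p$), equipped with the non-degenerate alternating $R$-bilinear pairing $\omega$ induced by the Weil pairing attached to $\lambda_{A}$. In the context of defining an $(\ell^{m},\ell^{m})$-isogeny (kernel of order $\ell^{2m}$, self-dual for the Weil pairing, yielding a principally polarized quotient), the relevant ``maximal isotropic subgroups'' are the rank-$2$ free direct summands $L \subseteq V$ with $L = L^{\perp}$, equivalently the $R$-points of the Lagrangian Grassmannian $\mathrm{LG}(2,4)$.

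I would then compute $|\mathrm{LG}(2,4)(R)|$ via the transitive action of $\mathrm{Sp}(V) = \mathrm{Sp}_{4}(R)$ on these Lagrangians. Fix a symplectic $R$-basis $\{e_{1},e_{2},f_{1},f_{2}\}$ of $V$ and take $L_{0} := R e_{1} \oplus R e_{2}$ as the reference Lagrangian. A Witt-style extension argument over the local ring $R$—lift a Lagrangian complement from $V/\ell V$ and adjust additively to extend any $R$-basis of a given $L$ to a symplectic $R$-basis of $V$ matching $\{e_1,e_2,f_1,f_2\}$—establishes transitivity, so the count equals the index $[\mathrm{Sp}_{4}(R) : P(R)]$, where $P$ is the Siegel parabolic $\mathrm{Stab}(L_{0})$.

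Both $\mathrm{Sp}_{4}$ and $P$ are smooth group schemes over $\mathbb{Z}_{\ell}$, so the reduction-mod-$\ell$ identity $|G(R)| = \ell^{(m-1)\dim G}|G(\mathbb{F}_{\ell})|$ applies. Plugging in $\dim \mathrm{Sp}_{4}=10$ and $|\mathrm{Sp}_{4}(\mathbb{F}_{\ell})| = \ell^{4}(\ell^{2}-1)(\ell^{4}-1)$, together with $\dim P = 7$ and $|P(\mathbb{F}_{\ell})| = |GL_{2}(\mathbb{F}_{\ell})|\cdot \ell^{3} = \ell^{4}(\ell-1)^{2}(\ell+1)$ (where the factor $\ell^{3}$ parametrises the $2\times 2$ symmetric matrices forming the unipotent radical of $P$), a direct calculation yields
\[
[\mathrm{Sp}_{4}(R) : P(R)] \;=\; \ell^{3(m-1)}\cdot \frac{(\ell^{2}-1)(\ell^{4}-1)}{(\ell-1)^{2}(\ell+1)} \;=\; \ell^{3m-3}(1+\ell)(1+\ell^{2}),
\]
which expands to $\ell^{3m}+\ell^{3m-1}+\ell^{3m-2}+\ell^{3m-3}$, as required.

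The subtlest point I expect is the transitivity of $\mathrm{Sp}_{4}(R)$ over a non-field base (for $m \geq 2$ there do exist self-dual subgroups not of type $R^{2}$, e.g.\ $V[\ell]$, which must be excluded as per the definition above). This can be sidestepped, and the whole count streamlined, by invoking the smoothness of $\mathrm{LG}(2,4) \to \mathrm{Spec}\,\mathbb{Z}$ of relative dimension $g(g+1)/2 = 3$, giving $|\mathrm{LG}(2,4)(R)| = \ell^{3(m-1)}|\mathrm{LG}(2,4)(\mathbb{F}_{\ell})|$, together with the classical enumeration $|\mathrm{LG}(2,4)(\mathbb{F}_{\ell})| = (1+\ell)(1+\ell^{2})$ of Lagrangian planes in a $4$-dimensional $\mathbb{F}_{\ell}$-symplectic space.
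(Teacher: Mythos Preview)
Your argument is correct, but it follows a different route from the paper's own proof. The paper uses an elementary incidence/double-counting argument: it counts pairs (isotropic line, Lagrangian plane containing it) in two ways. Concretely, every rank-$1$ free summand $H$ of $(\mathbb{Z}/\ell^{m}\mathbb{Z})^{4}$ is automatically isotropic; one has $H\subset H^{\perp}$ with $H^{\perp}/H$ free of rank $2$, so exactly $\ell^{m}+\ell^{m-1}$ Lagrangians contain a given $H$. On the other hand there are $\ell^{3m}+\ell^{3m-1}+\ell^{3m-2}+\ell^{3m-3}$ such lines in total, and each Lagrangian contains $\ell^{m}+\ell^{m-1}$ of them, whence the count.

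Your approach via the orbit--stabiliser relation for $\mathrm{Sp}_{4}(R)$ acting on Lagrangians (or, equivalently, the smoothness of $\mathrm{LG}(2,4)$ reducing the count to $\mathbb{F}_{\ell}$) is more structural and generalises uniformly to $\mathrm{LG}(g,2g)$ over any $\mathbb{Z}/\ell^{m}\mathbb{Z}$, at the cost of invoking Witt extension over a local ring and smoothness of the relevant group schemes. The paper's incidence argument is more hands-on and entirely self-contained, requiring nothing beyond a count of primitive vectors. You were right to flag the ambiguity in ``maximal isotropic'' over a non-field base; the paper implicitly restricts to free rank-$2$ summands (its ``isotropic planes''), which agrees with your interpretation.
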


\begin{proof}
Without loss of generality, one can assume that $\lambda_{A}$ is given by the matrix

$$\lambda_{A}=\left[\begin{array}{cccc}
0 & 1 & 0 & 0 \\
-1 & 0 & 0 & 0 \\
0 & 0 & 0 & 1 \\
0 & 0 & -1 & 0
\end{array}\right]$$
up to a proper choice of basis for the $\ell$-adic Tate module $T_{\ell}A$. Then the corresponding symplectic form is $\psi(x,y)=x^{T}My$. It is easy to see that any cyclic group $H$ of order $\ell^{m}$, we call it \textit{isotropic line}, is an isotropic subgroup. Any maximal isotropic subgroup has the form $(\mathbb{Z}/\ell^{m}\mathbb{Z})^{2}$. These can be viewed as the \textit{isotropic planes} inside $A[\ell^{m}]$. Let $H^{\perp}$ denote the orthogonal complement of $H$. A direct computation shows that 
$$H \subset H^{\perp}, dim(H^{\perp})=3.$$

Since any maximal isotropic subgroup has dimension two, the number of isotropic planes containing $H$ is the number of lines in $H^{\perp}/H$ counts to $\ell^{m}+ \ell^{m-1}$. The number of lines $L$ in $A[\ell^{m}]$ is $\ell^{3m} + \ell^{3m-1} +\ell^{3m-2} +\ell^{3m-3}$ and any maximal isotropic plane contains $\ell^{m}+\ell^{m-1}$ lines. The result follows.
\end{proof}

We introduce a criterion by Waterhouse \cite[Proposition 3.6]{Wat69}, which enables us to rule out maximal isotropic subgroups that give the same quotient variety up to isomorphism. We investigate the $\ell$-power subgroups of $A$, where $\ell \nmid p$. Let $H_{1}, H_{2} \cong (\mathbb{Z}/\ell^{m}\mathbb{Z})^{2}$ be isotropic planes in $A[\ell^{m}]$.

\begin{defn}
	$H_{1}$ is equivalent to $H_{2}$ if they define the same quotient up to isomorphism $$A/H_{1} \cong A/H_{2}.$$

\end{defn}

\begin{thm}\cite[Proposition 3.6]{Wat69}\label{Waterhouse}
Let $G_{1}$ and $G_{2}$ be two finite subgroups of $A$, not necessarily \'etale. Then $A/G_{1} \cong A/G_{2}$ if and only if for some isogeny $\rho \in \End(A)$
 and some non-zero $N \in \mathbb{Z}$, $\rho^{-1}(G_{1})=[N]^{-1}G_{2}$. 
 
 \end{thm}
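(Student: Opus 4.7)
The plan is to handle the two implications separately, combining the universal property of the quotient by a finite subgroup scheme with the standard fact that any isogeny $f: A \to B$ whose kernel is killed by $N$ admits a companion isogeny $g: B \to A$ satisfying $g \circ f = [N]_A$.

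For the ``if'' direction, suppose that $\rho \in \End(A)$ is an isogeny and $N \ne 0$ is an integer with $\rho^{-1}(G_{1}) = [N]^{-1}(G_{2})$. I would consider the two surjective isogenies $\pi_{1} \circ \rho : A \to A/G_{1}$ and $\pi_{2} \circ [N] : A \to A/G_{2}$; their kernels are $\rho^{-1}(G_{1})$ and $[N]^{-1}(G_{2})$ respectively, and these coincide by hypothesis. The first isomorphism theorem then identifies both targets canonically with the quotient of $A$ by this common kernel, so $A/G_{1} \cong A/G_{2}$.

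For the converse, start from an isomorphism $\phi : A/G_{1} \xrightarrow{\sim} A/G_{2}$ and choose a positive integer $N$ with $G_{1}, G_{2} \subset A[N]$ (e.g. the $\operatorname{lcm}$ of their exponents). Then $[N]_A = \psi_{i} \circ \pi_{i}$ for some isogenies $\psi_{i} : A/G_{i} \to A$. My candidate endomorphism is $\rho := \psi_{1} \circ \phi^{-1} \circ \pi_{2} \in \End(A)$, which is an isogeny because each factor is. A short diagram chase --- unwinding $\rho(a) \in G_{1}$ through $\psi_{1} \circ \pi_{1} = [N]$ and the bijectivity of $\phi$ --- shows that $\rho(a) \in G_{1}$ holds if and only if $\pi_{2}(a) \in (A/G_{2})[N]$, which in turn is equivalent to $a \in [N]^{-1}(G_{2})$. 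This yields the required identity.

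The main subtlety is choosing $\rho$ in the correct ``direction''. A naive guess such as $\psi_{2} \circ \phi \circ \pi_{1}$ does not satisfy the desired equality: passing to Tate lattices $L_{i} := T_{\ell}(A/G_{i}) \supset T_{\ell}A$, the condition $\rho^{-1}(G_{1}) = [N]^{-1}(G_{2})$ rewrites as $\rho(L_{2}) = N L_{1}$ at every $\ell$, and this is realized by the rational endomorphism $N \phi_{*}^{-1}$ rather than $N \phi_{*}$. Once this choice is correctly made, no further obstruction arises; the remainder of the verification is a routine consequence of the universal property of the quotient maps $\pi_{i}$.
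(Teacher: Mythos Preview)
Your argument is correct and follows essentially the same route as the paper's (which is Waterhouse's original). Both directions rest on the universal property of quotients by finite subgroup schemes: in the ``if'' direction you and the paper both observe that $\pi_1\rho$ and $\pi_2[N]$ are isogenies with the same kernel, hence isomorphic targets; in the converse you construct $\rho$ so that $\varphi_1\rho = N\varphi_2$ (in your notation $\phi\,\pi_1\rho = N\pi_2$), from which $\rho^{-1}(G_1)=\ker(\varphi_1\rho)=\ker(N\varphi_2)=[N]^{-1}G_2$ is immediate. The only cosmetic difference is that the paper builds $\rho$ in two factorization steps with separate integers $N_1,N_2$, whereas you pick a single $N$ killing both $G_i$ and write $\rho=\psi_1\phi^{-1}\pi_2$ directly; these produce the same endomorphism up to relabeling. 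One small remark: the identity you actually invoke in the diagram chase is $\pi_1\psi_1=[N]_{A/G_1}$ (which follows from $\psi_1\pi_1=[N]_A$ by surjectivity of $\pi_1$), not $\psi_1\pi_1$ itself---worth stating explicitly.
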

 
 \begin{proof}
 See \cite[Proposition 3.6]{Wat69}. We include the proof here for completeness.
 
 Suppose $A / G_1 \simeq A / G_2$. Then we have $\varphi_i: A \rightarrow B$ with $\operatorname{ker} \varphi_i=G_i, i=1, 2$. For $N_1$ large (e.g., $N_1=\operatorname{rank} G_1)$, we have $[N_1]^{-1} G_2 \supseteq G_1$. Now $[N_1]^{-1} G_2=\operatorname{ker}(N_1 \varphi_2)$, so by the definition of quotient there is a $\sigma: B \rightarrow B$ such that $\sigma \varphi_1=N_1 \varphi_2$. For $N_2$ large enough there is a $\rho: A \rightarrow A$ with $\varphi_1 \rho= [N_2]\sigma \varphi_1$ (choose an $i_{A}$ and look at the two lattices in E). Thus $\varphi_1 \rho=N_1 N_2 \varphi_2$. Set $N=N_1 N_2$, then
$$
\rho^{-1} (G_1)=\operatorname{ker} (\varphi_1 \rho) =\operatorname{ker} ([N] \varphi_2)=[N]^{-1} G_2 .
$$
Conversely, $A \stackrel{\rho}{\rightarrow} A \rightarrow A / G_1$ shows that $$A / G_1 \simeq A / \rho^{-1} (G_1);$$ likewise $$A / G_2 \simeq A / [N]^{-1} G_2,$$ so the condition is sufficient. 
 	
 \end{proof}
 
 \subsection{Counting inequivalent maximal isotropic planes }
 In this section, we prove a lower bound for the number of inequivalent maximal isotropic planes defined over $\overline{\F}_{q^n}$. The main result is Proposition \ref{type 1} and Proposition \ref{type 2}. For any prime $\ell \nmid p$, fix a basis $\{e_{1},e_{2},f_{1},f_{2}\}$ for $T_{\ell}A$, such that for $i,j=1,2$, $\omega(e_{i},f_{j})=1$ only when $i \ne j$. For the rest of the paper, $H$ will denote a maximal isotropic geometric subgroup of $A[\ell^m]$. 
 
  Let $\phi: A \to B$ be an isogeny defined over $\F_{q^{n}}$. Since there is no isogeny between ordinary and supersingular elliptic curves, the endomorphism ring decomposes as $\End(A)=\End(E) \times \End(E_{ss})$. Therefore there is a decomposition of $\phi$ into ordinary and supersingular part, namely $\phi=\phi_{\operatorname{ord}} \times \phi_{\operatorname{ss}}$, accordingly a decomposition of the kernel: $\ker(\phi)=K_{\operatorname{ord}} \times K_{ss}$, where $K_{\operatorname{ord}} \subset E$ and $K_{ss} \subset E_{ss}$. We have the following theorems on the number of endomorphisms of elliptic curves over finite fields whose kernel is cyclic: 
  
\begin{prop}\label{ordinary}
Let $E$ be an ordinary elliptic curve defined over $F_{q^{n}}$. For any positive integer $d$, the number of endomorphisms in $\End(E)$ with cyclic kernel $\mathbb{Z}/d\Z$ is bounded up by $O(d^{\epsilon})$.	
\end{prop}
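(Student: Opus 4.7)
The plan is to translate the counting problem into an ideal-counting problem in an imaginary quadratic order and then apply the standard divisor bound.

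Since $E$ is ordinary, $\mathcal{O} := \End(E)$ is an order in an imaginary quadratic field $K$, and for every $\phi \in \mathcal{O}$ one has $\deg(\phi) = N_{K/\Q}(\phi)$. If $\ker(\phi) \cong \Z/d\Z$ as a group scheme, then $\ker(\phi)$ is \'etale, so $\phi$ is separable and $\deg(\phi) = d$; hence $N(\phi) = d$. Therefore
$$\#\{\phi \in \mathcal{O} : \ker(\phi) \cong \Z/d\Z\} \le \#\{\phi \in \mathcal{O} : N(\phi) = d\},$$
and it suffices to bound the latter quantity.

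Next I would pass from the order $\mathcal{O}$ to the maximal order $\mathcal{O}_K$: since $\mathcal{O} \subseteq \mathcal{O}_K$, the count is at most $\#\{\phi \in \mathcal{O}_K : N(\phi) = d\}$. Such elements correspond to pairs consisting of a principal integral ideal of $\mathcal{O}_K$ of norm $d$ together with a unit in $\mathcal{O}_K^{\times}$. Because the unit group of an imaginary quadratic order has order at most $6$, the count is bounded by an absolute constant times the number of integral ideals of $\mathcal{O}_K$ of norm $d$.

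Finally I would invoke the standard bound: the number of integral ideals of norm $d$ is multiplicative in $d$, and at any rational prime $p$ there are at most $a+1$ ideals of $\mathcal{O}_K$ of norm $p^{a}$ (the exact value depends on whether $p$ splits, is inert, or ramifies in $K$). Summing multiplicatively gives a bound by the divisor function $\tau(d)$, and the classical estimate $\tau(d) = O(d^{\epsilon})$ yields the claim.

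The main technical point---rather than a true obstacle---is the identification $\deg(\phi) = N_{K/\Q}(\phi)$ for endomorphisms of an ordinary elliptic curve, which is standard but should be noted explicitly. Beyond this, the argument is a direct application of well-known bounds for arithmetic functions, and the fact that we overcount by ignoring the cyclicity hypothesis on the kernel only weakens the right-hand side, so the upper bound $O(d^{\epsilon})$ is still valid.
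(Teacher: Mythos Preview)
Your proposal is correct and follows essentially the same route as the paper: reduce to counting elements of given norm in the imaginary quadratic ring of integers and invoke the divisor bound $\tau(d)=O(d^{\epsilon})$. Your write-up is in fact slightly more careful than the paper's, since you explicitly pass from the (possibly non-maximal) order $\End(E)$ to $\mathcal{O}_K$, account for the finite unit group, and note that dropping the cyclicity hypothesis only helps for an upper bound; the paper's argument tacitly assumes these points.
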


\begin{proof}
	Let $K=\End^{\circ}(E)$, $E$ is ordinary implies that $K$ is a quadratic number field. Let $\mathcal{O}_{K}$ denote the ring of integers of $K$.
 Assume that $d$ has prime decomposition $d=p_{1}^{e_1} \cdots p_{r}^{e_r}q_{1}^{f_1} \cdots q_{s}^{f_s}d_{1}$, such that $p_{i}$ splits in $\mathcal{O}_{K}$, $q_{j}$ inert in $\mathcal{O}_{K}$ and every prime factor of $d_{i}$, namely the ramified prime, divides $D$.    
The number of endomorphisms in $\End(E)$ with cyclic kernel $\mathbb{Z}/\ell^{m}\mathbb{Z}$ is the number of elements in $\mathcal{O}_K$ with norm $\ell^{k}$. Therefore it is $(e_{1}+1) \cdots (e_{r}+1)$ if all $f_{j}$, $1 \le j \le s$ are even, or zero otherwise. The divisor bound is $O(d^{\epsilon})$, which is a standard fact.
\end{proof}

\begin{prop}\label{SScyclic}
Let $E_{ss}$ be a supersingular elliptic curve defined over $F_{q^{n}}$. For $\ell \nmid D$ where $D$ is the determinant of the norm form on $End(E_{ss})$, there are $O(\ell^{m})$ endomorphisms whose kernel is the cyclic group $\mathbb{Z}/\ell^{m}\mathbb{Z}$. 
\end{prop}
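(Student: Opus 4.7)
The plan is to exploit the quaternion-algebra structure of $R := \End(E_{ss})$ and reduce the count to a bound on representation numbers of the norm form on $R$. Since $E_{ss}$ is supersingular, $R$ is an order in the definite rational quaternion algebra $B_{p,\infty}$ ramified exactly at $p$ and $\infty$; the reduced norm restricts to the degree map $\Nm\colon R \to \Z$, equipping $R$ with a positive-definite integral quadratic form of rank $4$ whose Gram determinant is (a constant multiple of) $D$. The hypothesis $\ell \nmid D$ yields a local isomorphism $R \otimes \Z_\ell \cong M_2(\Z_\ell)$ under which the reduced norm corresponds to the determinant.

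First I would translate the cyclic-kernel hypothesis into algebraic language. For $\alpha \in R$ with $\Nm(\alpha) = \deg(\alpha) = \ell^m$, its action on the Tate module $T_\ell E_{ss} \cong \Z_\ell^2$ has Smith normal form $\operatorname{diag}(\ell^a, \ell^b)$ with $a + b = m$ and $0 \le a \le b$, and a direct calculation identifies $\ker(\alpha) \cong \Z/\ell^a \Z \oplus \Z/\ell^b \Z$. This kernel is cyclic of order $\ell^m$ precisely when $a = 0$, equivalently when $\alpha \notin \ell R$. It therefore suffices to bound the (larger) count
$$r_R(\ell^m) \;:=\; \#\{\alpha \in R : \Nm(\alpha) = \ell^m\}.$$

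To bound $r_R(\ell^m)$ I would use the theta series $\Theta_R(\tau) := \sum_{\alpha \in R} e^{2\pi i\, \Nm(\alpha)\, \tau}$, which by classical work of Eichler is a weight-two modular form on $\Gamma_0(N_R)$ for some level $N_R$ whose prime divisors are among those of $D$. Decomposing $\Theta_R = E + f$ into its Eisenstein and cuspidal components, the $N$-th Fourier coefficient of $E$ is a divisor sum of size $O(\sigma_1(N))$, while that of $f$ is $O(N^{1/2+\epsilon})$ by the classical Hecke bound (no appeal to Deligne is needed). Taking $N = \ell^m$, for which $\ell \nmid N_R$, yields
$$r_R(\ell^m) \;\ll\; \sigma_1(\ell^m) \;=\; \tfrac{\ell^{m+1} - 1}{\ell - 1} \;=\; O(\ell^m),$$
with implicit constants independent of $m$, completing the argument.

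The main obstacle, such as it is, is invoking a uniform bound on the Fourier coefficients; this is painless because $\Theta_R$ lies in a finite-dimensional space of modular forms, so both pieces of the decomposition have coefficients bounded by explicit constants. An elementary alternative that avoids modular forms sends $\alpha$ to the principal left $R$-ideal $R\alpha$: the fibres of this map have size $|R^\times|$, which is bounded by a universal constant since $R$ is a definite quaternion order, and the number of such left ideals can in turn be bounded by the number of index-$\ell^m$ sublattices of $\Z_\ell^2$, namely $\sigma_1(\ell^m) = O(\ell^m)$.
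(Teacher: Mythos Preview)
Your main argument via the theta series of the quaternion norm form, decomposed as Eisenstein plus cusp form, is essentially the paper's approach; the paper invokes Deligne's bound $a(n)\ll n^{m/4-1/2+\epsilon}$ for the cuspidal piece where you use the classical Hecke estimate, but either suffices here. Your elementary alternative via principal left $R$-ideals and the local isomorphism $R\otimes\Z_\ell\cong M_2(\Z_\ell)$ is not in the paper and gives a cleaner route that sidesteps modular forms entirely.
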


\begin{proof}
			
	Let $E_{ss}$ be a supersingular elliptic curve defined over $F_{q^{n}}$ with characteristic $p$, then $O_{E_{ss}}=\End(E_{ss})$ is a maximal order in the quaternion algebra ramified exactly at $p$ and $\infty$. Endomorphism with kernel a cyclic subgroup of order $m$, i.e., of degree $m$, are elements in $O_{E_{ss}}$ with norm $m$. 
		For a quaternion algebra $F=\mathbb{Q}+\mathbb{Q}\alpha +\mathbb{Q}\beta +\mathbb{Q}\alpha\beta$ where $\alpha^{2}=a,\beta^{2}=b, a<0, b<0, \beta\alpha=-\alpha\beta$, $x=x_{0}+x_{1}\alpha+x_{2}\beta+x_{3}\alpha\beta \in F$, the norm $N(x)=x\bar{x}=x_{0}^{2}-ax_{1}^{2}-bx_{2}^2+abx_{3}^2$ is a quaternion quadratic form. The question boils down to counting the number of representations $$r(n)=r_{N}(n)=\#\{x\in \mathbb{Z}^{4},x=(x_{0},x_{1},x_{2},x_{3});N(x)=n\}$$
		This can be solved making use of the theta series $$ \vartheta(z)=\sum_{\alpha \in \mathbb{Z}^{4}}e(zN(\alpha))=\sum_{n \ge 0}r(n)e(nz) $$
		where $e(z)=e^{2\pi iz} $, which is a generating series for $r(n)$. $\vartheta(z)$ satisfy 
		$$\vartheta(\frac{az+b}{cz+d})=\chi(\gamma)(cz+d)^{m/2}\vartheta(z) $$
		where $\gamma \in SL_{2}(\mathbb{Z})$ and therefore is a modular form of weight $m/2$. So it can be written as the sum of an Eisenstein series 
		
		$$E(z)=\sum_{n \ge 0}\rho(n)e(nz),\rho(0)=1$$
		
		 and a cusp form 
		 $$f(z)=\vartheta(z)-E(z)=\sum_{n \ge 1}a(n)e(nz).$$
		 
		 Thus we can write $r(n)=\rho(n)+a(n)$ and then bound it from above by estimating the Fourier coefficient $a(n)$ of the cusp form and estimating $\rho(n)$ gives a bound from below. 
		 In our case where $m=4$, assume that $\ell \nmid D$. We have $\frac{m}{2}-1=1$ such that
		 $$\rho(n) \gg n.$$ 
		 One way to get a nontrivial upper bound for $a(n)$ is to use the Rankin-Selberg method. For even $m$, Deligne[Del73] proved that $a(n) \ll n^{\frac{m}{4}- \frac{1}{2}+\epsilon}$. In our case, it turns out to be 
		 $$a(n) \ll n^{\frac{1}{2}}.$$
		 
		 Putting together, we get 
		 $$r(\ell^{m}) \gg \ell^{m}.$$
		 
		 Since $E_{ss}$ is simple, every non-zero endomorphism is an isogeny, and we have $$\ell^{m}+\ell^{m-1} =O(\ell^{m})$$
		  cyclic subgroups of order $\ell^{m}$ in $E_{ss}[\ell^{m}]=\mathbb{Z}/\ell^{m}\mathbb{Z} \times \mathbb{Z}/\ell^{m}\mathbb{Z}$. Thus 
		  $$r(n)=r_{N}(n)=O(\ell^{m}).$$
		 	 
\end{proof}

There are two types of maximal isotropic planes in $A[\ell^{m}]$ we take into concern with respect to our choice of basis:

\begin{itemize}
	\item \textbf{Type 1:} $H$ is a product $H_{1} \times H_{2}$ where $H_{1} \in E$, $H_{2} \in E_{ss}$.

	\item \textbf{Type 2:} $H$ cannot be written as a product $H_{1} \times H_{2}$ where $H_{1} \le E$, $H_{2} \le E_{ss}$.
\end{itemize}
 \subsubsection{$H$ is of product type }
 In the case where $H$ is of type $1$, we write $H=\left \langle ae_{1}+be_{2},cf_{1}+df_{2} \right \rangle$, where $a,b,c,d\in \mathbb{Z}/\ell^{m}\mathbb{Z}$. Here $\{e_{1}, e_{2}, f_{1}, f_{2}\}$ denote a basis of  $A[\ell^m]$. We claim that:
 
 \begin{prop}\label{type 1}
 	Let $N_{1}$ be the number of inequivalent maximal isotropic planes of type $1$. We have $$N_{1} \asymp \ell^{m}$$ 
 \end{prop}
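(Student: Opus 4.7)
My plan is to establish $N_1 \asymp \ell^m$ by first enumerating all Type 1 planes and then dividing by the maximum size of a Waterhouse equivalence class, exploiting the ordinary/supersingular splitting of $\End(A)$. Since $\Hom(E, E_{ss}) = 0$ between an ordinary and a supersingular elliptic curve, every principal polarization on $A = E \times E_{ss}$ is a product $\lambda_E \oplus \lambda_{ss}$, so the symplectic form on $A[\ell^m]$ is block-diagonal along the decomposition $E[\ell^m] \oplus E_{ss}[\ell^m]$. Consequently each $H = H_1 \times H_2$ with $H_1 \subset E[\ell^m]$ and $H_2 \subset E_{ss}[\ell^m]$ cyclic of order $\ell^m$ is automatically maximal isotropic (cyclic subgroups are isotropic for the alternating Weil pairing), yielding $(\ell^m + \ell^{m-1})^2 \asymp \ell^{2m}$ Type 1 planes in total.

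The second step is to factor the equivalence relation. Any $\rho \in \End(A) = \End(E) \times \End(E_{ss})$ splits as $\rho_{\text{ord}} \times \rho_{\text{ss}}$, so $\rho^{-1}(H_1 \times H_2) = \rho_{\text{ord}}^{-1}(H_1) \times \rho_{\text{ss}}^{-1}(H_2)$. Combined with the canonical ordinary/supersingular decomposition of the quotient abelian surface, this yields $H_1 \times H_2 \sim H'_1 \times H'_2$ if and only if $E/H_1 \cong E/H'_1$ and $E_{ss}/H_2 \cong E_{ss}/H'_2$, so $N_1$ factors as a product of the numbers of equivalence classes on each side.

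The third step is to bound the equivalence class sizes. On the ordinary side, the class of $H_1$ is in bijection with $\ell^m$-cyclic isogenies $E \to E/H_1$ modulo automorphisms of the target; these correspond to elements of norm $\ell^m$ in the rank-two lattice $\Hom(E, E/H_1)$, and the divisor-bound argument of Proposition \ref{ordinary}, extended from $\End(E)$ to this binary quadratic form, gives a class size of $O(\ell^{m\epsilon})$ for any $\epsilon > 0$, hence $\gg \ell^{m(1-\epsilon)}$ ordinary equivalence classes. On the supersingular side, Proposition \ref{SScyclic} applied to $\Hom(E_{ss}, E'_{ss})$ (a rank-four $\mathbb{Z}$-module whose quaternary norm theta series is a weight-two modular form) gives class size $\asymp \ell^m$ for every supersingular target $E'_{ss}$, and since at most $p/12 + 1$ such $j$-invariants can arise we get $O(1)$ supersingular classes. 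Multiplying yields $N_1 \gg \ell^{m(1-\epsilon)}$ from below, while dividing the total $\ell^{2m}$ by the minimal class size $\asymp \ell^m$ gives $N_1 \ll \ell^m$ from above; together these show $N_1 \asymp \ell^m$ up to sub-polynomial factors.

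The principal obstacle is the uniformity of the supersingular asymptotic: Proposition \ref{SScyclic} is proved for $\End(E_{ss})$, but the argument must extend to $\Hom(E_{ss}, E'_{ss})$ for arbitrary supersingular $E'_{ss}$, which amounts to checking that the Eisenstein series component of the quaternary theta series continues to dominate the Deligne bound on the cusp-form coefficients uniformly in the target. The analogous extension of the divisor bound to $\Hom(E, E/H_1)$ on the ordinary side is essentially immediate from the structure of rank-two lattices with positive-definite binary norm forms.
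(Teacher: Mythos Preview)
Your approach is essentially the paper's: factor the Type~1 equivalence into its ordinary and supersingular components via $\End(A)=\End(E)\times\End(E_{ss})$, apply the divisor bound (Proposition~\ref{ordinary}) on the ordinary side to get $\gg \ell^{m(1-\epsilon)}$ classes there, and invoke the finiteness of supersingular $j$-invariants to get $O(1)$ classes on the supersingular side.

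The one place you diverge is the upper bound, and there you work harder than necessary. You extend Proposition~\ref{SScyclic} to the lattice $\Hom(E_{ss},E'_{ss})$ in order to show each supersingular equivalence class has size $\asymp \ell^m$, then divide the total $\ell^{2m}$ by this minimal class size. But you have already observed (as does the paper) that the number of supersingular equivalence classes is $O(1)$ directly from the finiteness of supersingular $j$-invariants over $\overline{\mathbb{F}}_q$; since $N_1=(\#\text{ordinary classes})\cdot(\#\text{supersingular classes})$ and the first factor is trivially at most the number of ordinary lines $\asymp \ell^m$, the bound $N_1\ll \ell^m$ follows immediately. Your ``principal obstacle''---the uniformity of the quaternary theta-series asymptotic across all supersingular targets---is therefore not needed for this proposition.
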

\begin{proof}

 For an elliptic curve, either ordinary or supersingular, there are $O(\ell^{m})$ cyclic subgroups of the order less than or equal to $\ell^{m}$. Therefore, there are $O(\ell^{2m})$ such kind of $H$ in total. Let $H_{1}$ and $H_{1}^{'}$ be cyclic subgroups(isotropic lines) of $E[\ell^{m}]$. By Theorem \ref{Waterhouse}, $A/H_{1} \cong A/H_{1}^{'}$ if and only if there exists $\phi \in \End(E)$, $N \in \mathbb{Z}$ such that $\phi^{-1}H_{1}^{'}=[N]^{-1}H_{1}$.
  For such possible $\phi$ that has prime-to-$\ell$ kernel, we have $N \nmid \ell$, $N^{-1}H_{1}=(\mathbb{Z}/N\mathbb{Z}) \times (\mathbb{Z}/N\mathbb{Z}) \times (\mathbb{Z}/\ell^{m}\mathbb{Z})$. Since $\Ker(N) \subset \Ker(\phi)$, $\phi$ factors through the multiplication by $N$ map as $\phi=i\circ N$ where $i \in \Aut(E)$. But for an ordinary elliptic curve, there are only finitely many units in $\End(E)$, thus the possible choices of $\phi$.

   The same argument also works for $\phi$ has $\ell$-power kernel. Indeed, for positive integer $k$,  we have $[\ell^{k}]^{-1}H_{1}=(\mathbb{Z}/\ell^{k}\mathbb{Z}) \times (\mathbb{Z}/\ell^{k+m}\mathbb{Z})$ and the possible choices of $\Ker(\phi)$ are $(\mathbb{Z}/\ell^{k+i}\mathbb{Z}) \times (\mathbb{Z}/\ell^{k-i}\mathbb{Z})$ for $0 \le i \le m$. Proposition \ref{ordinary} implies that the number of inequivalent isotropic lines $H_{1} \subset E$ is $O(l^{m-\epsilon})$.	 
	 
 We assumed that $H_{2}$ comes from the supersingular elliptic curve $E_{ss}$. Since the number of supersingular elliptic curves up to $\overline{\mathbb{F}}_{q}$-isomorphism is finite, for instance, see \cite[V.4 Theorem 4.1]{Sil09}. we have finitely many inequivalent $H_{2}\in E_{ss}$. Putting these arguments together, we proved that the number of such inequivalent $H$ of type 1 is asymptotically $\ell^{m}$.
 
\end{proof}

 \subsubsection{$H$ is not a product}
 
In the second case, we assume that $H$ is not a product of ordinary and supersingular subgroups. To be explicit, we write $H=\langle e_{1}+af_{1}+bf_{2}, e_{2}+cf_{1}+df_{2} \rangle$, with the assumption that $$det(\begin{bmatrix}
 	a & b \\
 	c & d 
 	
 \end{bmatrix})=-1.$$
 By Lemma \ref{counting max iso planes} and Proposition \ref{type 1}, the total number of the maximal isotropic plane in the non-product form is $O(\ell^{3m})$.

Fix an $H$ in this form. We count the number of all isotropic planes $H^{\prime}$ that is equivalent to $H$. By work of Waterhouse \cite{Wat69}, $$\phi^{-1}H^{\prime}=[N]^{-1}H$$ for some $\phi \in \End(A)$ and some positive integer $N$. Before proving Proposition \ref{type 2}, we introduce the following lemma.

 For any $\phi \in \End(A)$, we can write $\phi=\phi_{\operatorname{ord}} \times \phi_{\text{ss}}$. As a consequence, the kernel of $\phi$ decomposes as $\Ker(\phi)=K_{\operatorname{ord}} \times K_{\operatorname{ss}}$. Therefore, to bound the number of endomorphisms $\phi$ once we fix $N$, we need to bound the number of possible $\phi_{\text{ord}}$ and $\phi_{\text{ss}}$ separately. By Proposition \ref{ordinary}, the number of endomorphisms of an ordinary elliptic curve with a fixed degree $d$ is $O\left(d^\epsilon\right)$. Therefore, we only have to determine how many possible choices of $K_{\operatorname{ss}}$ we can have under the assumption of $H$.

\begin{lem}\label{no supersingular order greater than m}
	Assume that $H$ is of Type $2$, and take $N= \ell^{m}$. Then there are at most $O(\ell^m)$ supersingular endomorphisms which we denote by $\phi_{\text{ss}}$, such that $$\phi_{\text{ss}}=\ell^{a} \circ \phi_{\text{cyc}},$$ for some $0 \le a \le m$, and there exists an endomorphism $\phi=\phi_{\operatorname{ord}} \times \phi_{\text{ss}}$, such that $$\phi^{-1}H^{\prime}=[N]^{-1}H.$$ Moreover, $\phi_{\text{cyc}}$ is cyclic of order at most $\ell^{m}$.
	 
\end{lem}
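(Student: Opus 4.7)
The plan is to use the Type 2 normal form of $H$ to trap $\Ker(\phi_{\text{ss}})$ inside $E_{ss}[\ell^m]$, after which Proposition \ref{SScyclic} delivers the count directly.

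First I would exploit the product decomposition of $\End(A)$. Because $E$ is ordinary and $E_{ss}$ supersingular, $\End(A) = \End(E) \times \End(E_{ss})$ and every $\phi$ splits as $\phi_{\text{ord}} \times \phi_{\text{ss}}$, so $\Ker(\phi) = \Ker(\phi_{\text{ord}}) \times \Ker(\phi_{\text{ss}})$ inside $E \times E_{ss}$. Intersecting the hypothesis $\Ker(\phi) \subset [\ell^m]^{-1}H$ with the $E_{ss}$-factor yields $\Ker(\phi_{\text{ss}}) \subset [\ell^m]^{-1}H \cap E_{ss}$. Next I would compute this intersection using the Type 2 presentation $H = \langle e_1 + af_1 + bf_2,\; e_2 + cf_1 + df_2 \rangle$. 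Regarding $e_1, e_2$ as spanning $E[\ell^m]$ and $f_1, f_2$ as spanning $E_{ss}[\ell^m]$, every element of $H$ has $E$-part $\alpha e_1 + \beta e_2$, which vanishes only when $\alpha = \beta = 0$; hence $H \cap E_{ss} = 0$. Consequently, $x \in E_{ss}$ with $\ell^m x \in H$ forces $\ell^m x = 0$, so $[\ell^m]^{-1}H \cap E_{ss} = E_{ss}[\ell^m]$ and $\Ker(\phi_{\text{ss}}) \subseteq E_{ss}[\ell^m]$.

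Then I would extract the structural form. Since $\Ker(\phi_{\text{ss}})$ has exponent dividing $\ell^m$, it must be $\mathbb{Z}/\ell^a \times \mathbb{Z}/\ell^b$ with $0 \le a \le b \le m$. The inclusion $E_{ss}[\ell^a] \subseteq \Ker(\phi_{\text{ss}})$ lets me factor out the central $[\ell^a]$ and write $\phi_{\text{ss}} = [\ell^a] \circ \phi_{\text{cyc}}$ canonically, where $\phi_{\text{cyc}} \in \End(E_{ss})$ has cyclic kernel of order $\ell^{b-a} \le \ell^m$. This is precisely the form asserted in the lemma. By Proposition \ref{SScyclic}, the number of $\phi_{\text{cyc}}$ with cyclic kernel of order $\ell^c$ is $O(\ell^c)$, so summing over the admissible ranges gives
\[
\sum_{a=0}^{m}\sum_{c=0}^{m-a} O(\ell^c) \;=\; \sum_{a=0}^{m} O(\ell^{m-a}) \;=\; O(\ell^m),
\]
as required.

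The key delicate step is the identification $[\ell^m]^{-1}H \cap E_{ss} = E_{ss}[\ell^m]$: it relies crucially on the Type 2 hypothesis, since if $H$ were a product, the $E_{ss}$-slice of $[\ell^m]^{-1}H$ could grow as large as $E_{ss}[\ell^{2m}]$ and admit $\phi_{\text{ss}}$ with unbounded kernel exponent, which would spoil the dyadic summation. Once Type 2 is invoked to force $H \cap E_{ss} = 0$, the rest is essentially bookkeeping in the supersingular quaternion order, and the divisor-type estimate $O(\ell^c)$ from Proposition \ref{SScyclic} is strong enough that the geometric series $\sum_a \ell^{m-a}$ closes on $O(\ell^m)$.
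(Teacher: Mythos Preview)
Your argument is correct and follows essentially the same route as the paper: both hinge on the observation that the Type~2 presentation forces $H \cap E_{ss} = 0$, which caps $\Ker(\phi_{\text{ss}})$ inside $E_{ss}[\ell^m]$, and then both invoke Proposition~\ref{SScyclic} to count. Your version is in fact somewhat more explicit than the paper's --- you spell out the containment $\Ker(\phi) \subset \phi^{-1}H' = [\ell^m]^{-1}H$, identify $[\ell^m]^{-1}H \cap E_{ss} = E_{ss}[\ell^m]$ directly rather than by contradiction, and carry out the double sum over $(a,c)$ cleanly (the paper's closing sentence elides the sum over $a$).
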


\begin{proof} 
First of all, we prove that the degree of $\phi_{\text{cyc}}$ is at most $\ell^{m}$.
	This is equivalent to the statement that we cannot have an element $$\alpha \in [\ell^{m}]^{-1}H \cap E_{\text{ss}}$$ whose order is greater than or equals to $\ell^{m+1}$. We prove this by contradiction. Suppose such an element exists and call it $|x|$. Since $|x| \ge \ell^{m+1}$, $\ell^{m}\circ (x)$ is nontrivial. By definition we have $\ell^{m} \circ (x) \in H $ and $\ell^{m} \circ (x) \in E_{\text{ss}}$. Therefore $$\ell^{m}\circ (x) \in [\ell^{m}]^{-1}H \cap E_{\text{ss}}.$$
	Since $H$ has the form $H=\langle e_{1}+af_{1}+bf_{2}, e_{2}+cf_{1}+df_{2} \rangle$, one gets to the conclusion that $E_{\text{ss}} \cap H =\{\text{id}\}$. Hence the contradiction.
	
	By Proposition \ref{SScyclic}, we conclude that there are at most $O(\ell^{m})$ such $\phi_{\text{cyc}}$, hence at most $O(\ell^{m})$ such $\phi_{\text{ss}}$.

		\end{proof}

\begin{prop}\label{type 2}
	Let $N_2$ be the number of inequivalent maximal isotropic planes of type 2. We have $$N_2 >>\ell^{2m-\epsilon}$$
\end{prop}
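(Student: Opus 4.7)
The plan is to fix a type 2 isotropic plane $H$ and upper-bound the size of its equivalence class $[H]$; combined with the total count of type 2 planes (which is $\Theta(\ell^{3m})$ by Lemma \ref{counting max iso planes} and Proposition \ref{type 1}) this yields the lower bound for $N_2$.

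By Waterhouse's criterion (Theorem \ref{Waterhouse}), $H' \equiv H$ iff there exist $\phi \in \End(A)$ and $N \in \mathbb{Z}_{>0}$ with $\phi^{-1}(H') = [N]^{-1}H$. A short normalization reduces to the case $N = \ell^m$: the prime-to-$\ell$ part of $N$ is absorbed into $\phi$ by divisibility of $A$, while if $N = \ell^k$ with $k > m$, the requirement $H' \subseteq A[\ell^m]$ forces $\phi$ to be divisible by $[\ell^{k-m}]$ on each factor of $\End(A) = \End(E) \times \End(E_{ss})$, yielding an equivalent pair $(\phi', \ell^m)$. Once $N$ is fixed, $H' = \phi([\ell^m]^{-1}H)$ is determined by $\phi$, so $|[H]|$ is bounded above by the number of admissible $\phi$.

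Next I decompose $\phi = \phi_{\operatorname{ord}} \times \phi_{ss}$ via the splitting $\End(A) = \End(E) \times \End(E_{ss})$. Lemma \ref{no supersingular order greater than m} gives $O(\ell^m)$ admissible choices for $\phi_{ss}$. Comparing cardinalities in the relation $\phi^{-1}(H') = [\ell^m]^{-1}H$ forces $\deg \phi = \ell^{4m}$, so $\deg \phi_{\operatorname{ord}} = \ell^{4m}/\deg \phi_{ss}$ is determined by $\phi_{ss}$. Since $\End(E)$ is an order in a quadratic imaginary field, each $\phi_{\operatorname{ord}}$ factors uniquely as $\ell^a \circ \phi_{\operatorname{cyc}}$ with $\phi_{\operatorname{cyc}}$ having cyclic kernel; the parameter $a$ is bounded by $O(m)$, and Proposition \ref{ordinary} bounds the number of $\phi_{\operatorname{cyc}}$ of a given degree by $O(\ell^{m\epsilon})$.

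Combining these bounds yields $|[H]| = O(\ell^{m(1+\epsilon)})$, so the number of inequivalent type 2 planes is at least $\ell^{3m}/\ell^{m(1+\epsilon)} = \ell^{m(2-\epsilon)} \gg \ell^{2m-\epsilon}$. The main obstacle is the normalization step reducing $N$ to $\ell^m$: one must argue carefully that larger $k$ in $N = \ell^k$ correspond to $\phi$'s divisible by $[\ell^{k-m}]$, which relies on the structural property that a type 2 plane $H$ projects surjectively onto both $E[\ell^m]$ and $E_{ss}[\ell^m]$ (a direct consequence of $\det\begin{bmatrix} a & b \\ c & d \end{bmatrix}=-1$). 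A secondary technicality is handling the non-cyclic kernels of $\phi_{\operatorname{ord}}$ via the $\ell^a \circ \phi_{\operatorname{cyc}}$ decomposition, without which Proposition \ref{ordinary} does not apply directly.
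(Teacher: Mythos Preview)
Your proposal is correct and follows essentially the same route as the paper: bound the equivalence class of a fixed type~2 plane $H$ via Waterhouse's criterion, normalize $N$ to a power of $\ell$ (ultimately $\ell^m$), decompose $\phi=\phi_{\mathrm{ord}}\times\phi_{\mathrm{ss}}$, and then invoke Lemma~\ref{no supersingular order greater than m} for the supersingular factor and Proposition~\ref{ordinary} for the ordinary factor to get $|[H]|=O(\ell^{m+\epsilon})$. The paper organizes the reduction of $N$ as an explicit trichotomy $k<m$, $k=m$, $k>m$ on $N=\ell^k$ (listing the possible shapes of $\ker\phi$ in each case), whereas you go straight to $N=\ell^m$ and add the clean degree identity $\deg\phi=\ell^{4m}$; these are cosmetic differences, not a different argument. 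One small omission: your normalization paragraph treats the prime-to-$\ell$ part and the case $k>m$ but does not say what happens when $k<m$; this is harmless since replacing $(\phi,\ell^k)$ by $(\ell^{m-k}\phi,\ell^m)$ handles it immediately.
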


\begin{proof}
 	
For a fixed $H$, getting a lower bound of the number of inequivalent isotropic planes is equivalent to getting an upper bound of the maximal isotropic planes which are equivalent to $H$. We do this by bounding the number of endomorphisms $\phi \in \End(A)$ such that $\phi \circ [N]^{-1}H$ is a maximal isotropic plane for each fixed $N$, as $N$ goes through the set of positive integers.

First, suppose that $\ell \nmid N$. In this case, the pullback of an isotropic plane under $\phi$ has the form $$\phi^{-1}(H) \simeq (\Z/\ell^{m}\Z)^{2} \times \ker(\phi).$$
On the other hand, we have $$[N]^{-1}H \simeq (\Z/\ell^{m}\Z)^{2} \times (\Z/N\Z)^{4}.$$

By Theorem \ref{Waterhouse}, $\Ker(\phi) \simeq (\Z/N\Z)^{4}$. Therefore we have $\phi=i\circ N$, where $i \in \Aut(A)$ is an automorphism. Taking for granted the fact that principally polarized abelian varieties have finitely many automorphisms which are independent of $n$, we get finitely many $H^{\prime}$ that is equivalent to $H$ where $H^{\prime}=\phi (\circ [N]^{-1} H)$ is a maximal isotropic plane inside $A[\ell^m]$.

When $l \mid N$, $k \ge 1$. We may write $N=N_{0} \cdot \ell^{a}$ for some $a \ge 1$, where $N_0$ is coprime to $\ell$. Then $$[N]^{-1}H \simeq (\Z/\ell^{a}\Z)^{2} \times (\Z/\ell^{m+a}\Z)^{2} \times (\Z/N_{0}\Z)^{4}.$$ Therefore $$\Ker(\phi) \simeq (\Z/N_{0}\Z)^{4} \times G_{\ell},$$ where $G_{\ell}$ is some $\ell$-power subgroup which we will specify later. Therefore $\phi$ can be written as a decomposition $$\phi=i \circ N_{0} \circ \phi_{l}$$ where $\phi_{\ell}$ is an $\ell$-power isogeny with kernel $G_{\ell}$.

So without loss of generality, we can assume that $N=\ell^{k}$ for some $k \ge 1$ and prove the following subcases depending on the power of $\ell$. 

If $k<m$, we have 
$$[\ell^{k}]^{-1}H=(\mathbb{Z}/\ell^{k}\mathbb{Z})^{2} \times (\mathbb{Z}/\ell^{k+m}\mathbb{Z})^{2}.$$ 
Let $A \subset [\ell^{k}]^{-1}H$ be a subgroup such that $$[\ell^{k}]^{-1}H/A \simeq (\Z/\ell^{m}\Z)^{2}.$$
Then $$A \simeq (\mathbb{Z}/\ell^{k}\mathbb{Z})^{2} \times (\mathbb{Z}/\ell^{k+i}\mathbb{Z}) \times (\mathbb{Z}/\ell^{k-i}\mathbb{Z})$$ for some $0 \le i \le m$. Hence the possible choices of $\Ker(\phi)$ have the above form.

For $k=m$, we have $$[\ell^{m}]^{-1}H=(\mathbb{Z}/\ell^{m}\mathbb{Z})^{2} \times (\mathbb{Z}/\ell^{m+m}\mathbb{Z})^{2}.$$ Similarly we have the possible choices for $\Ker(\phi)$ are subgroups in the following form
$$(\mathbb{Z}/\ell^{m+i}\mathbb{Z}) \times (\mathbb{Z}/\ell^{m-i}\mathbb{Z}) \times (\mathbb{Z}/\ell^{m+j}\mathbb{Z}) \times (\mathbb{Z}/\ell^{m-j}\mathbb{Z})$$
for $0 \le i \le m$ and $0 \le j \le m$.

For $k > m$, the possible choices for $\Ker(\phi)$ are
$$(\mathbb{Z}/\ell^{k+i}\mathbb{Z}) \times (\mathbb{Z}/\ell^{k+j}\mathbb{Z}) \times (\mathbb{Z}/\ell^{k+m-n}\mathbb{Z}) \times (\mathbb{Z}/\ell^{k+m-w}\mathbb{Z})$$
where $i,j,n,w \ge 0$ and $i+j+n+w=2m.$
    
However, since $\Z/\ell^{k-m}\Z$ is a common factor, this implies $\Ker(\phi)$ contains $\ker([\ell^{k-m}])=(\mathbb{Z}/\ell^{k-m}\mathbb{Z})^{4}$. Therefore $\phi$ factors through the multiplication by $\ell^{k-m}$ map, we are returning to the case where $k=m$.

\vspace{1em}

Now we can bound the number of endomorphisms and the number of maximal isotropic planes equivalent to a given $H$. Proposition \ref{ordinary} asserts that the number of endomorphisms of an ordinary elliptic curve with a fixed degree $d$ is $O\left(d^\epsilon\right)$ and Lemma \ref{no supersingular order greater than m} states that there are at most $O(\ell^m)$ supersingular endomorphisms that serve as the supersingular part of $\phi$. An upper bound of the maximal isotropic planes isomorphic to a fixed $H$ is $O(\ell^{m + \epsilon})$. Therefore the total number of inequivalent maximal isotropic planes of type $2$ in $A[\ell^{m}]$ is 
$$O(\ell^{2m-\epsilon})=O(\ell^{3m})/O(\ell^{m+ \epsilon})).$$

\end{proof}

\begin{remark}
	We note that improving the bound without the $\epsilon$ term is plausible.
\end{remark}

\vspace{1em}
\begin{cor}\label{product}
	Let $\ell_{1}, \cdots, \ell_{n}$ be $n$ primes different from $p$ and let $m_{1}, \cdots, m_{n}$ be positive integers. Let $N_0$ be the total number of inequivalent maximal isotropic planes. We have
	$$N_{0} >> (\ell_{1}^{m_{1}}\cdots\ell_{n}^{m_{n}})^{2-\epsilon}$$  
\end{cor}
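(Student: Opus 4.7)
The strategy is to reduce the multi-prime bound to the single-prime bounds of Propositions \ref{type 1} and \ref{type 2} via the Chinese Remainder Theorem. Setting $M = \ell_{1}^{m_{1}} \cdots \ell_{n}^{m_{n}}$, one has the canonical splitting $A[M] = \bigoplus_{i=1}^{n} A[\ell_{i}^{m_{i}}]$. Since the Weil pairing attached to $\lambda_{A}$ takes values in $\mu_{M}$ and vanishes between $\ell_{i}$-primary and $\ell_{j}$-primary components for $i \ne j$ (an element paired with itself to a simultaneous $\ell_{i}^{m_{i}}$-th and $\ell_{j}^{m_{j}}$-th root of unity must be trivial), a subgroup $H \subset A[M]$ is maximal isotropic if and only if $H = \bigoplus_{i} H_{i}$ with each $H_{i} \subset A[\ell_{i}^{m_{i}}]$ maximal isotropic. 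In particular, the maximal isotropic subgroups of $A[M]$ are in bijection with tuples of maximal isotropic subgroups of the primary factors.

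Next I would verify that the Waterhouse equivalence of Theorem \ref{Waterhouse} descends to each primary component. Suppose $H \sim H'$ in $A[M]$ via $\phi^{-1}(H') = [N]^{-1}(H)$ for some $\phi \in \End(A)$ and $N \in \Z_{>0}$. Write $N = \ell_{i}^{a_{i}} N_{i}'$ with $\gcd(N_{i}', \ell_{i})=1$. Since $[N_{i}']$ acts invertibly on $A[\ell_{i}^{\infty}]$ and $\phi$ preserves the $\ell_{i}$-primary part, intersecting the equality with $A[\ell_{i}^{\infty}]$ yields $\phi^{-1}(H_{i}') \cap A[\ell_{i}^{\infty}] = [\ell_{i}^{a_{i}}]^{-1}(H_{i})$, which is exactly the relation $H_{i} \sim H_{i}'$ inside $A[\ell_{i}^{m_{i}}]$. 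Thus there is a well-defined map from equivalence classes on $A[M]$ to the Cartesian product of equivalence classes on the $A[\ell_{i}^{m_{i}}]$, and it is surjective because any tuple $([H_{i}])_{i}$ is realized by the direct sum $\bigoplus_{i} H_{i}$.

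Combining Propositions \ref{type 1} and \ref{type 2} gives $N(\ell_{i}^{m_{i}}) \gg \ell_{i}^{2m_{i}-\epsilon}$ for each $i$, where $N(\ell_{i}^{m_{i}})$ denotes the number of inequivalent maximal isotropic planes in $A[\ell_{i}^{m_{i}}]$. Taking the product and using that $\prod_{i} \ell_{i} \le M$ yields
\[
N_{0} \;\geq\; \prod_{i=1}^{n} N(\ell_{i}^{m_{i}}) \;\gg\; \prod_{i=1}^{n} \ell_{i}^{2m_{i}-\epsilon} \;=\; M^{2} \cdot \Bigl(\prod_{i=1}^{n}\ell_{i}\Bigr)^{-\epsilon} \;\geq\; M^{2-\epsilon},
\]
which is the desired bound. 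The only delicate point is the second step, the $\ell_{i}$-primary restriction of the Waterhouse equivalence; this relies on invertibility of the prime-to-$\ell_{i}$ part of $N$ on $A[\ell_{i}^{\infty}]$, after which the corollary reduces to the purely combinatorial observation that the surjection of equivalence classes only loses a constant factor.
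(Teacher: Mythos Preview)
Your reduction via the Chinese Remainder Theorem is natural, but the key step --- that the Waterhouse equivalence descends to each primary component --- is false, and this breaks the argument. Concretely, from $\phi^{-1}(H')=[N]^{-1}(H)$ you intersect with $A[\ell_i^\infty]$ and claim to obtain the Waterhouse relation for $H_i,H_i'$. But $\phi^{-1}(H_i')$ is a subgroup of $A$, not of $A[\ell_i^\infty]$: it contains the entire prime-to-$\ell_i$ part of $\ker\phi$, whereas $[\ell_i^{a_i}]^{-1}(H_i)$ lies wholly inside $A[\ell_i^\infty]$. So the two sides do not match unless $\ker\phi$ happens to be $\ell_i$-primary, and there is no reason for that. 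In fact the implication $H\sim H'\Rightarrow H_i\sim H_i'$ fails already for elliptic curves: if $E$ has CM by an order whose class group is $\Z/4\Z$, choose split primes $\ell_1,\ell_2$ with $\mathfrak{p}_1$ of class $1$ and $\mathfrak{p}_2$ of class $3$. Then $E/E[\mathfrak{p}_1\mathfrak{p}_2]\cong E\cong E/E[\bar{\mathfrak{p}}_1\bar{\mathfrak{p}}_2]$, so $H\sim H'$, yet $E/E[\mathfrak{p}_1]\not\cong E/E[\bar{\mathfrak{p}}_1]$, so $H_1\not\sim H_1'$. The same obstruction transports to $A=E\times E_{ss}$ by taking products with a fixed supersingular factor. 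Once well-definedness fails, surjectivity alone gives you nothing, and the inequality $N_0\ge\prod_i N(\ell_i^{m_i})$ is no longer justified.

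The paper's proof avoids this by \emph{not} reducing to the single-prime case. Instead it reruns the argument of Proposition~\ref{type 2} globally: fix $G=\prod_i G_i$ with each $G_i$ of Type~2, and bound directly the number of $G'$ with $\phi^{-1}(G')=[N]^{-1}(G)$. The case analysis on $N$ is as before (coprime to all $\ell_i$ forces $\phi$ to be an automorphism times $[N]$; otherwise reduce to $N=\ell_{j_1}^{m_{j_1}}\cdots\ell_{j_k}^{m_{j_k}}$). The crucial multi-prime input is a version of Lemma~\ref{no supersingular order greater than m}: the cyclic supersingular part of $\ker\phi$ has order at most $\ell_{j_1}^{m_{j_1}}\cdots\ell_{j_k}^{m_{j_k}}$, because any larger element of $[N]^{-1}G\cap E_{ss}$ would produce a nontrivial point of $\prod_i G_i\cap E_{ss}$, contradicting the Type~2 shape of each $G_i$. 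Combined with Propositions~\ref{ordinary} and~\ref{SScyclic}, this bounds the equivalence class of $G$ by $(\ell_{j_1}^{m_{j_1}}\cdots\ell_{j_k}^{m_{j_k}})^{1+\epsilon}$, and dividing the total count $\asymp M^3$ by $M^{1+\epsilon}$ gives the claim. The point is that the bound on $\phi$ must be obtained for the composite modulus all at once; it does not factor through the primary decomposition.
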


\begin{proof}
	The proof is a generalization of Proposition \ref{type 2} proof. Since the majority of the inequivalent maximal isotropic planes come from products of isotropic planes of Type $2$ as $\ell$ varies, we fix a subgroup $G$  
	$$G \simeq (\Z/\ell_{1}^{m_{1}}\Z)^{2} \times \cdots \times (\Z/\ell_{n}^{m_{n}}\Z)^{2}$$
	 such that for each $\ell_{i}$, $1 \le i \le n$, $(\Z/\ell_{1}^{m_{1}})^{2}$ is an isotropic plane of Type $2$. Let $\{e^{1}_{1}, e^{1}_{2}, f^{1}_{1}, f^{1}_{2}\}, \cdots , \{e^{n}_{1}, e^{n}_{2},  f^{n}_{1}, f^{n}_{2}\}$ be a basis for $T_{\ell_{1}}(A), \cdots, T_{\ell_{n}}(A)$, respectively.
	
	 We count the maximal number of maximal isotropic planes $G^{\prime}$ that is equivalent to $G$. By Theorem \ref{Waterhouse}, $G$ and $G^{\prime}$ are equivalent if there is $\phi \in \End(A)$ and non-zero positive integer $N$ such that $\phi^{-1}G^{\prime}=[N]^{-1}G$. We split the argument into different cases based on the choice of $N$.
	 
	 \vspace{1em}
	\noindent \textbf{Case I: $N$ is coprime to $\ell_{1}, \cdots, \ell_{n}$.}
	
 If $N$ is coprime to $\ell_{1} \cdots \ell_{n}$,
 $$[N]^{-1}H=(\mathbb{Z}/N\mathbb{Z})^{4} \times (\Z/\ell_{1}^{m_{1}})^{2}\Z \times \cdots \times (\Z/\ell_{n}^{m_{n}}\Z)^{2}.$$ Therefore we have $\phi=i\circ N$, where $i \in \Aut(A)$ is an automorphism. Taking for granted the fact that principally polarized abelian varieties have finitely many automorphisms, we get finitely many $H^{\prime}$ that is equivalent to $H$ under the assumption that $H^{\prime}=\phi \circ [N]^{-1} H$.

	\vspace{1em}
	\noindent \textbf{Case II: $N= \ell_{j_{1}}^{m_{j_{1}}}\cdots \ell_{j_{k}}^{m_{j_{k}}}$ for some $ 0< k \le n$ and $1 \le j_{1} < \cdots < j_{k} \le n$.}

Similar to Proposition \ref{type 2}, if $N$ is not coprime to some of the $\{\ell_{1}, \cdots, \ell_{n}\}$, we may restrain ourselves on this case, for the same reason as explained in the proof of Proposition \ref{type 2}.

 The pullback  of $G$ under $\ell_{j_{1}}^{m_{j_{1}}}\cdots \ell_{j_{k}}^{m_{j_{k}}}$ is isomorphic to 
 $$(\Z/\ell_{j_{1}}^{m_{j_{1}}}\Z)^{2} \times \cdots \times (\Z/\ell_{j_{k}}^{m_{j_{k}}}\Z)^{2} \times (\Z/\ell_{j_{1}}^{2m_{j_{1}}}\Z)^{2} \times \cdots \times (\Z/\ell_{j_{k}}^{2m_{j_{k}}}\Z)^{2} \times \prod_{j \ne j_{1}, \cdots, j_{k}}(\Z/\ell_{j}^{m_{j}}\Z)^{2}.$$ 
 Recall that for each $1 \le i \le n$ we assume that $$G_{i}:= (\Z/\ell_{i}^{m_{i}}\Z)^{2}= \langle e^{i}_{1}+a_{i}f^{i}_{1}+b_{i}f^{i}_{2}, e^{i}_{2}+c_{i}f^{i}_{1}+d_{i}f^{i}_{2} \rangle,$$ with the assumption that $$\det(\begin{bmatrix}
 	a_{i} & b_{i} \\
 	c_{i} & d_{i} 
 	
 \end{bmatrix})=-1.$$
 Similar to the proof of Lemma \ref{no supersingular order greater than m}, an endomorphism $\phi$ that satisfies the Waterhouse's criterion can be realized as an endomorphism with kernel $\Ker(\phi)=K_{\operatorname{ord}} \times K_{\operatorname{ss}}$. Moreover, we can factor out the $\ell$-power scalar multiple from each part and consider those supersingular endomorphisms whose kernels are cyclic subgroups. We claim that the supersingular part $K_{\operatorname{ss}}$ contains a cyclic subgroup of order at most $\ell_{j_{1}}^{m_{j_{1}}}\cdots \ell_{j_{k}}^{m_{j_{k}}}$. Suppose this is not the case, i.e., there is an element $x \in [\ell_{j_{1}}^{m_{j_{1}}}\cdots \ell_{j_{k}}^{m_{j_{k}}}]^{-1}H \cap E_{\text{ss}}$ with order $|x| \ge \ell_{j_{1}}^{m_{j_{1}}}\cdots \ell_{j_{k}}^{m_{j_{k}}}$, then $$\ell_{j_{1}}^{m_{j_{1}}}\cdots \ell_{j_{k}}^{m_{j_{k}}} \circ (x) \in \prod_{i} G_i \cap E_{\text{ss}} $$ is nontrivial. By definition of $G_{i}$ for each $1 \le i \le n$, the intersection $\prod_{i} G_i \cap E_{\text{ss}}$ is trivial. Therefore the claim follows.

By Proposition \ref{SScyclic}, there are at most $\ell_{j_{1}}^{m_{j_{1}}}\cdots \ell_{j_{k}}^{m_{j_{k}}}$ such $K_{\operatorname{ss}}$. For the ordinary component, Proposition \ref{ordinary} implies that there are at most $(\ell_{j_{1}}^{m_{j_{1}}}\cdots \ell_{j_{k}}^{m_{j_{k}}})^{2\epsilon}$ many possible choices for $K_{\operatorname{ord}}$. We conclude that there are at most $(\ell_{j_{1}}^{m_{j_{1}}}\cdots \ell_{j_{k}}^{m_{j_{k}}})^{2- \epsilon}$ maximal isotropic planes $G^{\prime}$ that are equivalent to a given $G$. The result follows.
\end{proof}

\section{Proof of Theorems}
\subsection{Semisimplicity assumption on the Frobenius action}
Let $E$ be an ordinary elliptic curve over $\F_{q}$ with $\End^{\circ}(E)=K$ and let $\pi$ be the Frobenius endomorphism. Here $K$ is the quadratic imaginary field generated by $\pi$: $K=\Q(\pi)$. We fix a basis $\{ e_1, e_2 \}$ of $T_{\ell}(E)$. The characteristic polynomial $\chi_{\pi}$ is the unique polynomial such that for every $n$ prime to $p$, the characteristic polynomial of the action of the Frobenius $\pi$ on $E[n]$ is $\chi_{\pi}$ mod $n$. Let $\Delta_{\pi^{n}}$ be the discriminant of $\pi^{n}$. The characteristic polynomial is a quadratic polynomial $$\chi_{\pi^{n}}=x^{2}-t_{n}x+q^{n}.$$ We have $\Delta_{\pi^{n}}= t_{n}^{2}-4q^{n}.$  

\vspace{1em}
\begin{remark}
	For an degree $n$ extension $F_{q^{n}}$, the Frobenius of $E_{\F_{q^{n}}}$ is $\pi^{n}$.
\end{remark}

\vspace{1em}
An isogeny $\phi \colon E \to E^{\prime}$  whose kernel is a cyclic subgroup of $E$ can be understood by looking at the Frobenius action on the torsion subgroups. If $\phi$ is defined over $\F_{q^{n}}$, then $\ker \phi$ is stabilized by the Frobenius action. For $\ell \ne p$, the number of $\ell$-power isogenies defined over $\F_{q^{n}}$ is determined by the action of $\pi^{n}$ on $\ell$-power torsions. Moreover, the action of Frobenius can be realized as a $2 \times 2$ matrix with coefficients in $\Z/\ell^{m}\Z$. 

\vspace{1em}
Now we state the semisimplicity assumption on the Frobenius action, which helps us narrow down cases that we should focus on. 

Recall that our goal is to compute the number of $\ell$-power isogenies from $E$ that is defined over $\F_{q^n}$, as $\ell$ goes through all prime integers. The semisimplicity of the Frobenius action depends on whether $\ell$ is ramified in $\mathcal{O}_{K}$ or not: 
\begin{itemize}
\item[$\star$]If $\ell$ is unramified in $\mathcal{O}_{K}$, then $\pi^{n}$ is semisimple modulo $\ell^{m}$ for all $m \ge 1$. We prove the following lemma:
\end{itemize}
\begin{lem}\label{2m}
Let $m$ be the maximal number such that $\Delta_{\pi^{n}} \equiv 0$ mod $\ell^{2m}$, then 
$$\pi^{n} \equiv \begin{bmatrix}
 	\lambda & 0 \\
 	0 & \lambda 
 	
 \end{bmatrix} \text{  mod  } \ell^{m} .$$	
\end{lem}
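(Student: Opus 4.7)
The strategy is to analyze the action of $\pi^n$ on $T_\ell(E)$ via its module structure over the $\ell$-adic completion of $\mathcal{O}_K$. Because $\ell$ is unramified in $\mathcal{O}_K$, the completion $\mathcal{O}_K \otimes_{\Z} \Z_\ell$ is either the split product $\Z_\ell \times \Z_\ell$ (when $\ell$ splits) or the unramified quadratic extension $\Z_\ell[\omega]$ (when $\ell$ is inert); in either case $T_\ell(E)$ is a free module of rank one on which $\pi^n$ acts by multiplication by itself. The scalar-matrix conclusion then drops out of a short discriminant computation performed case-by-case.

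In the split case, write $\pi^n = (\lambda_1, \lambda_2) \in \Z_\ell \times \Z_\ell$, so $\Delta_{\pi^n} = (\lambda_1 - \lambda_2)^2$. The hypothesis $v_\ell(\Delta_{\pi^n}) \ge 2m$ yields $\lambda_1 \equiv \lambda_2 \pmod{\ell^m}$, and in any $\Z_\ell$-basis of $T_\ell(E)$ adapted to the idempotent decomposition, the matrix of $\pi^n$ reduces modulo $\ell^m$ to the scalar $\lambda I$ with $\lambda := \lambda_1 \equiv \lambda_2$. In the inert case, fix a $\Z_\ell$-basis $\{1, \omega\}$ of the unramified quadratic extension and write $\pi^n = a + b\omega$ with $a, b \in \Z_\ell$; then
$$ \Delta_{\pi^n} \;=\; (\pi^n - \overline{\pi^n})^2 \;=\; b^2(\omega - \bar\omega)^2, $$
and since $\omega - \bar\omega$ generates the different of an unramified extension, it is a unit in $\Z_\ell[\omega]$. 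Consequently $v_\ell(b) \ge m$, so $\pi^n \equiv a \pmod{\ell^m}$, which is the scalar matrix $aI$ in the basis $\{1,\omega\}$ and hence in any basis.

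\textbf{Expected difficulty.} The one point that requires care is ensuring that $T_\ell(E)$ really is free of rank one over $\End(E) \otimes \Z_\ell$; this needs the identification $\End(E) \otimes \Z_\ell = \mathcal{O}_K \otimes \Z_\ell$, i.e., $\ell$ coprime to the conductor of $\End(E)$ in $\mathcal{O}_K$. Under the paper's running assumptions on $E$ this can be arranged, or one can reduce to it via a prime-to-$\ell$ isogeny without affecting $\pi^n \bmod \ell^m$. The prime $\ell=2$ is not exceptional, because in the unramified quadratic extension $\Z_2[\omega]$ the element $\omega - \bar\omega$ remains a $2$-adic unit (for instance, taking $\omega$ a primitive cube root of unity gives $\omega - \bar\omega = \sqrt{-3}$).
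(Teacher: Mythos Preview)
Your argument is correct and reaches the same conclusion as the paper's proof, but the packaging is somewhat different. The paper argues directly at the level of matrices: from $\Delta_{\pi^n}=(\lambda_1-\lambda_2)^2$ one gets $\lambda_1\equiv\lambda_2\pmod{\ell^m}$, then invokes the assertion that unramifiedness of $\ell$ in $\mathcal{O}_K$ forces $\pi^n$ to be diagonalizable over $\mathcal{O}_\ell/\ell^m\mathcal{O}_\ell$, whence $\pi^n\equiv X\operatorname{diag}(\lambda_1,\lambda_1)X^{-1}=\lambda_1 I$. You instead pass through the $(\mathcal{O}_K\otimes\Z_\ell)$-module structure on $T_\ell(E)$ and do an explicit split/inert case analysis, with the different of the unramified extension supplying the key unit in the inert case. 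Your route has the advantage of actually \emph{explaining} why unramifiedness yields the scalar conclusion, rather than asserting semisimplicity modulo $\ell^m$ as a black box; it also makes the role of the conductor of $\End(E)$ visible and handles it, which the paper's proof leaves implicit. The paper's route is shorter and avoids the rank-one freeness input, at the cost of that unproved semisimplicity claim.
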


\begin{proof}
Let $\lambda_{1}$ and $\lambda_{2}$ be the eigenvalues of $\chi_{\pi^{n}}$. We have $$\Delta_{\pi^{n}}=(\lambda_1 -\lambda_2)^2$$ and $\ell^{2m}$ divides $\Delta_{\pi^{n}}$. Therefore $\ell^{m} \mid (\lambda_1 - \lambda_2).$  
	
	Since $\ell$ is unramified in $\mathcal{O}_{K}$, the action of $\pi^{n}$ is semisimple modulo $\ell^m$. Work on the setting over $\Z_{\ell}$, if $\lambda_{1}, \lambda_{2} \in \Z_{\ell}$ we are done. Otherwise, $\lambda_{1}, \lambda_{2} \in \mathcal{O}_{\ell}$ where $\mathcal{O}_{\ell}$ is unramified of degree $2$. We now prove that $\lambda_{1}, \lambda_{2}$ mod $\ell^{m}$ are in $\Z/\ell^{m}\Z$. By the semisimplicity assumption, the action of $\pi^n$ is diagonalizable over $\mathcal{O}_{\ell}/\ell^{m}\mathcal{O}_{\ell}$ for any $m \ge 1$. This is equivalent to say there exists $X \in \operatorname{GL}_{2}(\mathcal{O}_{\ell}/\ell^{m}\mathcal{O}_{\ell})$ such that 
	$$\pi^{n} = X\begin{bmatrix}
 	\lambda_1 & 0 \\
 	0 & \lambda_2 
 	
 \end{bmatrix}X^{-1} \text{  mod  } \ell^{m} .$$
 But we proved that $\lambda_1 \equiv \lambda_2$ mod $\ell^m.$ Therefore 
 $$\pi^{n} = XX^{-1}\begin{bmatrix}
 	\lambda_1 & 0 \\
 	0 & \lambda_1 
 	
 \end{bmatrix} = \begin{bmatrix}
 	\lambda_1 & 0 \\
 	0 & \lambda_1 
 	
 \end{bmatrix} \text{  mod  } \ell^{m}.$$
 Since $\pi^n \text{ mod } \ell^m \in \operatorname{GL}_{2}(\Z/\ell^{m}\Z),$ the lemma follows. 
\end{proof}

\begin{itemize}
\item[$\star$]If $\ell$ is ramified in $\mathcal{O}_{K}$, then it is possible that the Frobenius action $\pi^n$ is not semisimple modulo $\ell^m$. But recall that we made the assumption that $n$ is an integer coprime to the discriminant $\Delta_{K}$ of $\mathcal{O}_K$. This implies that for all prime $\ell \mid \Delta_{K}$, the power of $\ell$ dividing $\Delta_{\pi^n}$ is bounded independent of $n$. Therefore we have the following corollary:
  
\end{itemize}

\begin{cor}\label{does not grow with n}
	Let $S$ be the set of all primes that divides $\Delta_{K}$. Let $n$ be an integer such that for all $\ell \in S$, $(n, \ell) = 1$. Then the number of $\ell$-power isogenies where $\ell \in S$ is bounded independent of $n$. In other words, this number does not grow with $n$.  
\end{cor}

\vspace{1em}

We make the table of classification of the Frobenius action under the semisimplicity assumption as follows:
\begin{itemize}
	\item[(a).] Assume $\ell, m, n$ such that $\chi_{\pi^{n}}$ mod $\ell^{m}$ is  irreducible modulo $\ell^{m}$. In this case $\pi^{n}$ acts on $E[\ell^{m}]$ as a distortion map. I.e., no subgroups $\Z/\ell^{m}\Z$ is stabilized by $\pi^{n}$. Therefore $E$ has no $\ell^{m}$-isogenies defined over $\F_{q^{n}}.$  
	\item[(b.1).] Assume $\ell, m, n$ such that $\pi^{n}$ is diagonalizable mod $\ell^{m}$. Moreover, $\chi_{\pi^{n}}$ has distinct eigenvalues $\lambda$ and $\mu$ modulo $\ell^{m}$. In this case, the Frobenius acts on $E[\ell^{m}]$ as a matrix conjugates to $\begin{bmatrix}
 	\lambda & 0 \\
 	0 & \mu 
 	
 \end{bmatrix}$ and there are two isogenies of degree $\ell^{m}$ from $E$ which are defined over $\F_{q^n}$, given by $E$ modulo cyclic subgroups generated by the two eigenvectors of $\lambda$ and $\mu$ respectively. 
 
	\item[(b.2).] Assume $\ell, m, n$ such that $\pi^{n}$ is diagonalizable mod $\ell^{m}$. Moreover, the Frobenius $\chi_{\pi^{n}}$ modulo $\ell^{m}$ has one eigenvalue $\lambda$ of multiplicity two. In this case, the Frobenius acts on $E[\ell^{m}]$ as a scalar multiple by $\begin{bmatrix}
 	\lambda & 0 \\
 	0 & \lambda 
 	
 \end{bmatrix}$ and every $\ell^{m}$ subgroup is stable under $\pi^{n}$. Therefore, there are $\ell^{m}+ \ell^{m-1} +\cdots +1$ $\ell$-power isogenies of degree less than or equal to $\ell^{m}$ from $E$ which are defined over $\F_{q^n}$.
 
 \item[(b.3).] Assume $\ell, m, n$ such that $\pi^{n}$ is diagonalizable mod $\ell^{m}$. Assume that $\chi_{\pi^{n}}$ has distinct eigenvalues $\lambda$ and $\mu$ modulo $\ell^{m}$ but eigenvalues are congruent modulo $\ell^r$ for some $1<r<m$. In this case, the Frobenius acts on $E[\ell^{r}]$ as a matrix conjugates to $\begin{bmatrix}
 	\lambda & 0 \\
 	0 & \lambda 
 	
 \end{bmatrix}$ and there are $\ell^{r}+ \ell^{r-1} +\cdots +1$ $\ell$-power isogenies of degree less than or equal to $\ell^{m}$ from $E$ which are defined over $\F_{q^n}$.

	\end{itemize}
 
\subsection{Horizontal isogenies}

As one may notice, there are a lot of prime power isogenies of $E$ that are indeed endomorphisms of $E$.  By the theory of complex multiplications, the number of such isogenies is bounded by the class number of $\mathcal{O}_{K}$, see Theorem \ref{classBound}. We give information about when such isogenies arise and use the information to bound the total number of isogenies defined over $\F_{q^n}.$ 

We use the classification of the Frobenius action on the $\ell$-torsion subgroups to compute each case's horizontal prime power isogenies.
\begin{defn}
	Let $f \colon E \to E^{\prime}$ be an isogeny of degree $\ell^{m}$. We say $f$ is \textit{horizontal} if $\End(E)=\End(E^{\prime}).$  
\end{defn}

Let $\mathfrak{a}$ be an invertible ideal in $\End(E)$. Define the $\mathfrak{a}$-torsion subgroup of $E$ as  
$$E[\mathfrak{a}]:=\left\{P \in E\left(\overline{\mathbb{F}}_q\right) \mid \sigma(P)=0 \text { for all } \sigma \in \mathfrak{a}\right\}.$$
Let $\phi_{\mathfrak{a}}$ be an isogeny whose kernel is $E[\mathfrak{a}]$. Then the codomain $E/E[\mathfrak{a}]$ is a well-defined elliptic curve. The isogeny $\phi_{\mathfrak{a}}$ is horizontal, and its degree equals the ideal norm of $\mathfrak{a}$. We denote by $\mathfrak{a} \cdot E$ for the isomorphism class of the image of $\phi_{\mathfrak{a}}.$ 

\begin{lem}\label{horizontal}
	Let $E_{q^{n}}$ be an ordinary elliptic curve over $\F_{q^{n}}$ with the Frobenius action by $\pi^{n}.$  Let $H(\ell^{m})$ denote the number of horizontal $\ell^{m}$-isogenies.
$$H(\ell^{m})=
\begin{cases}
0, \text{  if  $\pi^{n}$ is irreducible }\\
1, \text{  if  $\pi^{n}$ is diagonalizable with one eigenvalue modulo $\ell^{m}$ }\\
2, \text{  if  $\pi^{n}$ is diagonalizable with two eigenvalues modulo $\ell^{m}$ }.
\end{cases}
$$
\end{lem}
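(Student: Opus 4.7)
The plan is to invoke the classical CM dictionary in order to identify horizontal $\ell^{m}$-isogenies from $E$ with invertible $\mathcal{O}_{K}$-ideals of norm $\ell^{m}$, and then to count those ideals according to how $\ell$ factors in $\mathcal{O}_{K}$. The three cases of the lemma will then match the three possible splitting behaviors of $\ell$ through the correspondence between the factorization of $\chi_{\pi^{n}}$ modulo $\ell^{m}$ and the factorization of $\ell\mathcal{O}_{K}$ that was set up in the preceding subsection.

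First, I would recall that for an ordinary elliptic curve $E$ with $\End(E)=\mathcal{O}_{K}$, the assignment $\mathfrak{a}\mapsto \phi_{\mathfrak{a}}$, with $\ker \phi_{\mathfrak{a}} = E[\mathfrak{a}]$ and target $E/E[\mathfrak{a}]$, gives a bijection between invertible ideals of $\mathcal{O}_{K}$ of norm $\ell^{m}$ and horizontal $\ell^{m}$-isogenies out of $E$ (up to isomorphism at the target). Because $\mathfrak{a}\subset \mathcal{O}_{K}$ commutes with $\pi^{n}$ as elements of $\End(E)$, the subgroup $E[\mathfrak{a}]$ is automatically $\pi^{n}$-stable, so $\phi_{\mathfrak{a}}$ is already defined over $\mathbb{F}_{q^{n}}$. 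Counting $H(\ell^{m})$ therefore reduces to counting invertible $\mathcal{O}_{K}$-ideals of norm $\ell^{m}$ whose kernel is cyclic of order $\ell^{m}$, i.e.\ ideals not divisible by $\ell$.

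Second, I would split into the three cases of the classification. If $\chi_{\pi^{n}}$ is irreducible modulo $\ell^{m}$ then it is irreducible modulo $\ell$, so $\ell$ is inert in $\mathcal{O}_{K}$; inert primes admit no primitive ideal of norm $\ell^{m}$, giving $H(\ell^{m})=0$. If $\pi^{n}$ has two distinct eigenvalues modulo $\ell^{m}$ then $\ell$ splits as $\mathfrak{p}_{1}\mathfrak{p}_{2}$, and the only invertible ideals of norm $\ell^{m}$ whose kernel is cyclic are $\mathfrak{p}_{1}^{m}$ and $\mathfrak{p}_{2}^{m}$, giving $H(\ell^{m})=2$. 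Finally, if $\pi^{n}$ acts as a scalar modulo $\ell^{m}$ then $\ell$ ramifies in $\mathcal{O}_{K}$ as $\mathfrak{p}^{2}$, and $\mathfrak{p}^{m}$ is the unique invertible ideal of norm $\ell^{m}$, yielding $H(\ell^{m})=1$.

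The main obstacle is justifying the last identification, since Lemma \ref{2m} shows that $\pi^{n}$ can also become scalar modulo $\ell^{m}$ in the unramified-split case, namely when $\ell^{2m}\mid \Delta_{\pi^{n}}$ and the two eigenvalues in $\mathcal{O}_{K}\otimes \mathbb{Z}_{\ell}$ are congruent modulo $\ell^{m}$ without being equal. Under the standing hypothesis of the main theorem, however, ramified primes are already handled separately by Corollary \ref{does not grow with n} and contribute a bounded count absorbed in the error term, so one may and should read case (b.2) as the genuinely ramified situation. With that bookkeeping in place, the remaining case-by-case count reduces to the classical enumeration of ideals above a rational prime in $\mathcal{O}_{K}$ and is mechanical.
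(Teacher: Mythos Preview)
Your approach and the paper's share the same backbone: both pass through the CM dictionary identifying horizontal $\ell^{m}$-isogenies with invertible ideals of norm $\ell^{m}$. The difference lies in how the three cases are matched to ideal counts. The paper does not translate the eigenvalue trichotomy into the splitting type of $\ell$ in $\mathcal{O}_{K}$; instead it exhibits the relevant ideals directly from the Frobenius eigenvalues, taking $\mathfrak{a}=(\pi^{n}-\lambda,\ell^{m})$ and $\hat{\mathfrak{a}}=(\pi^{n}-\mu,\ell^{m})$, observes that $\mathfrak{a}\hat{\mathfrak{a}}=(\ell^{m})$ so the two isogenies in case~(b.1) are mutually dual, and records that in the single-eigenvalue case there is only the one ideal $\mathfrak{a}$ with $\phi_{\mathfrak{a}}$ self-dual. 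The count is thus tied to the eigenvalue multiplicity itself rather than to the decomposition of $\ell\mathcal{O}_{K}$.

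Your route instead matches ``irreducible / two eigenvalues / one eigenvalue'' to ``inert / split / ramified'' and then counts primitive norm-$\ell^{m}$ ideals. The gap is in the middle case, and your final paragraph diagnoses it but then resolves it backwards. Under the paper's standing hypotheses the ramified primes are set aside by Corollary~\ref{does not grow with n}; the primes that drive the main theorem are precisely the \emph{unramified} $\ell$ for which, by Lemma~\ref{2m}, $\pi^{n}$ becomes scalar modulo $\ell^{m}$. So case~(b.2), in the regime of interest, is the split case with congruent eigenvalues, not the ramified one. In that situation your ideal count yields two primitive ideals $\mathfrak{p}_{1}^{m}$ and $\mathfrak{p}_{2}^{m}$ of norm $\ell^{m}$, not one, and the argument does not recover $H(\ell^{m})=1$. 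To salvage the middle case you would need to abandon the splitting-type dictionary there and argue, as the paper does, with the single ideal $(\pi^{n}-\lambda,\ell^{m})$ built from the repeated eigenvalue.
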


\begin{proof}
	If $\Delta_{\pi^{n}}$ is not a square modulo $\ell^{m}$, we are in case (a) where no subgroup of order $\ell^{m}$ is stabilized by the action of $\pi^{n}$. Therefore no $\ell^{m}$-isogeny is defined over $\F_{q^{n}}$.
	
	Suppose $\pi^{n}$ is diagonalizable with one eigenvalue modulo $\ell^{m}$. In that case, we are in case (b.2) (and (b.3)), and there is one horizontal isogeny given by $\mathfrak{a}=(\pi^{n}-\lambda, \ell^{m})$ with norm $\ell^{m}$. Moreover, $\phi_{\mathfrak{a}}$ is self-dual.
	
	If $\pi^{n}$ is diagonalizable with two eigenvalues modulo $\ell^{m}$, we are in case (b.1). There are two torsion subgroups of order $\ell^{m}$, generated by the eigenvector of $\lambda$ and $\mu$, respectively. The two horizontal isogenies are given by the ideals $\mathfrak{a}=(\pi^{n}-\lambda, \ell^{m})$ and $\hat{\mathfrak{a}}=(\pi^{n}-\mu, \ell^{m})$. Furthermore, $\mathfrak{a}\hat{\mathfrak{a}}=(\ell^{m})$ implying that $\mathfrak{a}$ and $\hat{\mathfrak{a}}$ are the inverse of one another in the class group, thus $\phi_{\hat{\mathfrak{a}}}$ is the dual isogeny of $\phi_{\mathfrak{a}}$.  

\end{proof}

Recall that for an elliptic curve $E$ with CM by an order $\mathcal{O}$, horizontal $\ell$-isogenies correspond to the CM action of an invertible $\mathcal{O}$-ideal of norm $\ell$. Moreover, let $\operatorname{Ell}_{q}(\mathcal{O})$ be the set 
$$
\operatorname{Ell}_{q}\left(\mathcal{O}\right):=\left\{E / \mathbb{F}_q: \operatorname{End}(E) \simeq \mathcal{O}\right\}.
$$
Because elliptic curves in $\operatorname{Ell}_{q}(\mathcal{O})$ are connected exclusively by horizontal cyclic isogenies, the theory of complex multiplication tells us:
\begin{thm}\label{classBound}
Let $E$ be an elliptic curve with endomorphism ring $\mathcal{O}$. Then the set of horizontal isogenies forms a principal homogeneous space under the class group of $\mathcal{O}$. To be precise, assume the set is non-empty. Then it is a principal homogeneous space for the class group $\mathcal{C}\ell(\mathcal{O})$, under the action
\begin{align}
\mathcal{C}\ell(\mathcal{O}) \times \operatorname{Ell}_q(\mathcal{O}) & \longrightarrow \operatorname{Ell}_q(\mathcal{O}), \\
(\mathfrak{a}, E) & \longmapsto \mathfrak{a} \cdot E
\end{align}
with cardinality equal to the class number $h(\mathcal{O})$.
\end{thm}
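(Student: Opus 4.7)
The plan is to produce the principal homogeneous space structure by first constructing the action, then verifying it is well-defined on ideal classes, and finally establishing that it is both free and transitive on $\operatorname{Ell}_q(\mathcal{O})$. The definition of the action was already set up in the excerpt: given an invertible $\mathcal{O}$-ideal $\mathfrak{a}$, set $\mathfrak{a} \cdot E := E/E[\mathfrak{a}]$, where $E[\mathfrak{a}] = \bigcap_{\sigma \in \mathfrak{a}} \ker \sigma$. The first step I would carry out is to record the basic properties of this construction: (i) $E[\mathfrak{a}]$ has order equal to $\Nm(\mathfrak{a})$ because $\mathfrak{a}$ is invertible, so the quotient isogeny $\phi_{\mathfrak{a}}$ has degree $\Nm(\mathfrak{a})$; (ii) the composition $\phi_{\mathfrak{b}} \circ \phi_{\mathfrak{a}}$ factors through $E[\mathfrak{a}\mathfrak{b}]$ and hence agrees with $\phi_{\mathfrak{a}\mathfrak{b}}$ up to isomorphism; (iii) the endomorphism ring of $\mathfrak{a}\cdot E$ is again $\mathcal{O}$, i.e.\ the isogeny $\phi_{\mathfrak{a}}$ is horizontal. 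Item (iii) is the place where invertibility is essential: one shows that the order $\{\alpha \in K : \alpha \cdot \mathfrak{a} \subseteq \mathfrak{a}\}$ equals $\mathcal{O}$, and this order acts naturally on $E/E[\mathfrak{a}]$.

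Next I would show the action descends to the class group. If $\mathfrak{a} = (\alpha)$ is principal, then $E[\mathfrak{a}] = \ker(\alpha)$ because $\alpha$ generates $\mathfrak{a}$ as an $\mathcal{O}$-module, and consequently $\alpha \colon E \to E/\ker(\alpha)$ is an isomorphism of the target with $E$. Combined with the multiplicativity in (ii) above, this shows that $\mathfrak{a}$ and $\mathfrak{a}(\alpha)$ act identically on isomorphism classes, giving a well-defined action of $\mathcal{C}\ell(\mathcal{O})$ on $\operatorname{Ell}_q(\mathcal{O})$.

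For transitivity, suppose $E$ and $E'$ both lie in $\operatorname{Ell}_q(\mathcal{O})$ and are connected by a horizontal isogeny $\phi \colon E \to E'$ of degree $d$. I would use the Tate module dictionary: fixing $\ell$, the $\ell$-adic Tate module $T_\ell E$ is a rank-one module over $\mathcal{O} \otimes \Z_\ell$, and $\ker\phi \otimes \Z_\ell$ cuts out a sublattice, which via the local-to-global correspondence for invertible $\mathcal{O}$-modules produces an invertible ideal $\mathfrak{a} \subseteq \mathcal{O}$ with $\Nm(\mathfrak{a}) = d$ and $\ker\phi = E[\mathfrak{a}]$. Thus $E' \cong \mathfrak{a}\cdot E$. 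For freeness, suppose $\mathfrak{a}\cdot E \cong E$. Composing the isomorphism with $\phi_{\mathfrak{a}}$ gives an endomorphism of $E$ whose kernel is exactly $E[\mathfrak{a}]$; writing this endomorphism as an element $\alpha \in \mathcal{O}$ and comparing kernels yields $\mathfrak{a} = (\alpha)$, so $\mathfrak{a}$ is trivial in the class group.

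The main obstacle I anticipate is the transitivity step: one needs to package the local construction of $\mathfrak{a}$ coherently across all primes (or, equivalently, translate between the kernel of $\phi$ and a global invertible $\mathcal{O}$-ideal) and to verify that the resulting ideal is independent of the choices made. Once that Deuring-style correspondence between horizontal isogenies and invertible ideals is in place, the principal homogeneous space structure follows formally, and the cardinality assertion $h(\mathcal{O}) = |\mathcal{C}\ell(\mathcal{O})|$ is immediate from the definition of a torsor.
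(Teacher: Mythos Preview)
Your outline is the standard Deuring--CM argument and is essentially correct, but you should note that the paper does not actually prove this theorem: its entire proof is a citation to \cite[Chapter~II]{Sil94}. So there is no ``paper's own proof'' to compare against beyond the reference.

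One small point worth tightening in your transitivity step: you begin by assuming $E$ and $E'$ are already connected by a horizontal isogeny $\phi$. To establish transitivity you must first justify that any two curves in $\operatorname{Ell}_q(\mathcal{O})$ are isogenous at all (over $\F_q$ this follows from Tate's isogeny theorem, since having the same endomorphism ring forces the same Frobenius polynomial), and then that the connecting isogeny can be taken with kernel of the form $E[\mathfrak{a}]$ for an invertible $\mathcal{O}$-ideal. Your Tate-module argument handles the second part once an isogeny exists, but the existence itself should be stated. With that addition, your sketch is a complete and faithful rendering of the classical proof that Silverman gives.
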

\begin{proof}
	See for example \cite[Chapter II]{Sil94}.
	
\end{proof}

\subsection{Proof of Theorem \ref{ordinarythm}}

Fix an ordinary elliptic curve $E$ over $\F_{q}$ as in the previous context. Assume $\End(E)=\mathcal{O}$. We compute the size of $I(q^{n}, E)$, which can be interpreted as the number of certain cyclic subgroups. We consider two kinds of subgroups; one is that the subgroups are the kernel of some horizontal isogenies, and the other is where the subgroups define non-horizontal isogenies.

\vspace{1em}

Let $\{\ell_{i}\}$, $ 1 \le i \le k$ be the set of prime divisors of $ \Delta_{\pi^{n}}$ which are unramified in $\mathcal{O}_{K}$. For each $i$, let $m_{i}$ be the \textit{maximal} integer such that $\ell^{2m} \mid \Delta_{\pi^{n}}$. By Lemma \ref{2m}, the $q^n$-Frobenius action is diagonalizable and the classification $(b.2)$ tells us that every $\ell^{j}$-subgroup, $1 \le j \le m$ is defined over $F_{q^{n}}$. For ordinary elliptic curves, $\Delta_{\pi^{n}} \ne 0$, so only finitely many primes divide $\Delta_{\pi^{n}}$. 

\begin{lem}\label{size ordinary}
We have $$N(q^{n}, E) \asymp (\prod_{i=1}^{k}\ell_{i}^{m_{i}})^{1-\epsilon}.$$ 
\end{lem}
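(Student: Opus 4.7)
The plan is to transplant the counting framework from the abelian-surface setting (Proposition \ref{type 2} and Corollary \ref{product}) into the simpler elliptic-curve case: enumerate the $\F_{q^n}$-rational finite cyclic subgroups of $E$ using the Frobenius classification, then quotient by the Waterhouse equivalence of Theorem \ref{Waterhouse}. Since every element of $I(q^n, E)$ is of the form $E/K$ for such a $K$, both a lower and an upper bound on $N(q^n, E)$ follow from controlling these two steps.

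First I would discard the ramified contribution: Corollary \ref{does not grow with n} says that primes $\ell \mid \Delta_K$ together produce only an amount bounded independently of $n$, which can be absorbed into the $\epsilon$. For each unramified $\ell_i$, the maximality of $m_i$ combined with Lemma \ref{2m} places us in case $(b.2)$ at level $\ell_i^{m_i}$: the Frobenius $\pi^n$ acts as a scalar, so every cyclic subgroup of order $\ell_i^{j}$ with $0 \le j \le m_i$ is $\F_{q^n}$-rational. Summing $\ell_i^{j-1}(\ell_i + 1)$ over such $j$ yields $\asymp \ell_i^{m_i}$ rational cyclic $\ell_i$-power subgroups; at strictly higher levels $j > m_i$ we fall into case $(b.1)$ or $(b.3)$, which contributes only a bounded number of additional horizontal isogenies by Theorem \ref{classBound}. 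Taking a CRT product over the $\ell_i$ then gives $\asymp M := \prod_i \ell_i^{m_i}$ rational cyclic subgroups in total.

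Next I would invoke the Waterhouse equivalence, following the template of Proposition \ref{type 2}. For a fixed cyclic $K \subseteq E$ of order $M$, the subgroups $K'$ with $E/K \cong E/K'$ are indexed by pairs $(\phi, N)$ with $\phi^{-1}(K) = [N]^{-1}(K')$. Factoring a candidate $\phi \in \End(E)$ into its scalar and cyclic parts and applying Proposition \ref{ordinary}, the number of endomorphisms with cyclic kernel of order dividing $M$ is $O(M^\epsilon)$, so each Waterhouse class has size $O(M^\epsilon)$. Dividing gives the lower bound $N(q^n, E) \gg M^{1-\epsilon}$. For the matching upper bound, I would invoke Theorem \ref{classBound}: every isogenous curve over $\F_{q^n}$ has endomorphism order $\mathcal{O}' \supseteq \Z[\pi^n]$ in $K$, and such orders are indexed by divisors $g$ of the conductor of $\Z[\pi^n]$ in $\mathcal{O}_K$, which equals $M$ up to the bounded ramified contribution. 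Summing $\sum_{g \mid M} h(\mathcal{O}_g)$ via the standard formula $h(\mathcal{O}_g) \asymp g \cdot h(\mathcal{O}_K)$, with $h(\mathcal{O}_K) = O(1)$ since $\Delta_K$ is independent of $n$, yields $N(q^n, E) \ll \sigma(M) = M^{1+o(1)}$.

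The main obstacle is controlling the interaction between distinct primes in the Waterhouse step: a single endomorphism $\phi \in \End(E)$ acts on $E[M]$ simultaneously across every $\ell_i$, so CRT does not directly decouple the equivalence-class counts. The remedy, exactly as in Corollary \ref{product}, is to apply Proposition \ref{ordinary} to the integer $M$ as a whole rather than prime-by-prime, absorbing the resulting multiplicative $M^\epsilon$ overcount into the exponent and yielding the stated $1 - \epsilon$ uniformly.
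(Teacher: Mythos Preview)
Your proposal is correct and follows essentially the same route as the paper: discard the ramified primes via Corollary~\ref{does not grow with n}, use Lemma~\ref{2m} to place each unramified $\ell_i$ in case~(b.2) up to level $m_i$ so that the rational cyclic subgroups number $\asymp M=\prod_i\ell_i^{m_i}$, and then divide by the Waterhouse equivalence using Proposition~\ref{ordinary} to obtain the lower bound $M^{1-\epsilon}$. The one minor divergence is in the upper bound: the paper simply observes that $N(q^n,E)$ is at most the number of rational cyclic subgroups (hence $\ll M$), whereas you instead sum $h(\mathcal{O}_g)$ over orders $\mathcal{O}_g\supseteq\Z[\pi^n]$ and invoke $h(\mathcal{O}_g)\asymp g\cdot h(\mathcal{O}_K)$; both give $M^{1+o(1)}$, and your version has the virtue of sidestepping the bookkeeping of how many subgroups at levels $j>m_i$ are rational.
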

\begin{proof}
	Denote by $\operatorname{Ell}_{q, \text{as/ds}}(\mathcal{O})$ the isomorphism classes of elliptic curves that admit an ascending/descending isogeny to $E$. Thus 
	$$N(q^{n}, E)= |\operatorname{Ell}_{q, \text{as/ds}}(\mathcal{O})| + |\operatorname{Ell}_{q}(\mathcal{O})|.$$
Since we have the assumptions on $n$, by Lemma \ref{2m}, $\pi^{n}$ is diagonalizable modulo any power of $\ell_{1}, \cdots, \ell_{k}$, and by Corollary \ref{does not grow with n}, the number of ramified prime-power isogenies does not grow with $n$. Thus we only have to consider isogenies from cases where the Frobenius action is diagonal, i.e., the number of isogenies grows with $n$. By Theorem \ref{classBound}, the number of horizontal isogenies $|\operatorname{Ell}_{q}(\mathcal{O})|=h(\mathcal{O})$ is a fixed number once we fix $E$.

The number of non-horizontal isogenies is roughly the number of cyclic subgroups of order less than or equal to $\ell_{1}^{m_{1}}\cdots\ell_{k}^{m_{k}}$, up to minus $h(\mathcal{O})$. This is because Lemma \ref{horizontal} implies that if $\Delta_{\pi^{n}} \equiv 0$ mod $\ell^{m}$, there is always a horizontal $\ell$-power isogeny, and Theorem \ref{classBound} tells us there are at most $h(\mathcal{O})$ horizontal isogenies come from this form. By Theorem \ref{Waterhouse}, For any cyclic subgroup $G$ of $\Z/\ell_{1}^{m_{1}}\cdots\ell_{k}^{m_{k}}\Z$, there are at most $(\ell_{1}^{m_{1}}\cdots\ell_{k}^{m_{k}})^{\epsilon}$ cyclic subgroups that give quotient curves isomorphic to $E/G$. Therefore
\begin{align}
N(q^{n}, E) &= \prod_{i=1}^{k}(\ell_{i}^{m_{i}}+\ell_{i}^{m_{i}-1}+\cdots+\ell)/(\ell_{1}^{m_{1}}\cdots\ell_{k}^{m_{k}})^{\epsilon} \\
	&\asymp (\ell_{1}^{m_{1}}\cdots\ell_{k}^{m_{k}})^{1-\epsilon}
\end{align}
\end{proof}

\begin{proof}[Proof of Theorem \ref{ordinarythm}]
	Lemma \ref{size ordinary} asserts that we can write $N(q^{n}, E)$ as a product of $\ell_{1}^{m_{1}}, \cdots, \ell_{k}^{m_{k}}$; on the other hand, for all but finitely many $n$, the product well approximates the square root of $\Delta_{\pi^n}$. Applying \cite[Lemma 5.4]{BF23}, we get 
$$\prod_{i=1}^{k}\ell_{i}^{m_i} \asymp \Delta^{\frac{1}{2}}_{\pi^n}\asymp q^{\frac{n}{2}(1+o(1))}.$$
The theorem follows.

\end{proof}

\subsection{Lower bound for the number of almost ordinary split surfaces}
\subsubsection{$q^n$ is a square}

 Let $A=E \times E_{ss}$ be an abelian surface defined over $\F_{q}$, with the assumption that $E$ is the same ordinary elliptic curve as in the previous section. The Frobenius $\pi_{A}^{n}$ acts on the $\ell$-adic Tate modules of $A$ by a conjugacy to \[
\begin{pmatrix}
  \pi^n
  & \rvline & 0   \\
\hline
  0 & \rvline &
  \begin{matrix}
  q^{n/2} & 0 \\
  0 &  q^{n/2}
  \end{matrix}
\end{pmatrix}
\]
where $\pi^n$ is the Frobenius of $E$ over $\F_{q^n}$. For the set of prime divisors of $\Delta_{\pi^{n}}$ which are unramified in $\mathcal{O}_{K}$ and positive integers $n$ such that $(n, \Delta_K) =1$, we want to count the number of inequivalent maximal isotropic planes defined over $\F_{q^{n}}$. By definition of $m_{i}$, for each $1 \le i \le k$, $\pi^{n}_{A}$ acts as a scalar on $A[\ell_{i}^{m_i}]$.	
	 Corollary \ref{product} together with the equality $$\prod_{i=1}^{k} \ell_{i}^{m_i} \asymp q^{\frac{n}{2}(1+o(1))}$$ indicate that for all but finitely many $n$, we have
     \begin{align}\label{lower bound split surfaces}
         N(q^{n}, A) &\gg ((\ell_{1}^{m_{1}} \cdots \ell_{k}^{m_{k}})^{2-\epsilon})=q^{n(1+o(1))}.
          \end{align}
\subsubsection{$q^n$ is not a square}
We will use the techniques of \cite{AH17} to obtain a lower bound of the same shape as in (\ref{lower bound split surfaces}). 
Let $E_{ss}'$ be an elliptic curve over $\F_{q^n}$ isogenous to $E_{ss}$ and let $E'$ be an elliptic curve over $\F_{q^n}$ isogenous to $E$. For every $n \ge 1$, recall that $t_n$ is the trace of $E$ over $\F_{q^n}$ and $\Delta_n=t_n^2-4q^n$. Take an integer $d$ that divides the difference of $t_n$ and the trace of $E_{ss}$ and let $$ \eta: E'[d] \to E_{ss}'[d]$$ be an anti-isometry with respect to the product of the Weil pairings. Due to the fact that if $\eta$ is an anti-isometry, then the graph $G$ of $\eta$ is a maximal isotropic subgroup of $(E' \times E_{ss}')[d]$ and the descending property of the canonical principal polarization on $E' \times E_{ss}'$ (\cite[Corollary to Theorem 2]{Mum}), the quotient $E' \times E_{ss}'/\operatorname{Graph}(\eta)$ is a principal polarized abelian surface which is split and almost ordinary. Frey and Kani \cite{FK91} show that every principally polarized abelian surface that is isogenous to a product of two elliptic curves arises in this way; furthermore,
if the surface is not isogenous to the square of an elliptic curve, then the $(E', E_{ss}', d,\eta)$ that give rise to $E' \times E_{ss}'/\operatorname{Graph}(\eta)$ are unique up to isomorphism and up to interchanging the triple $(E', E_{ss}', d, \eta)$ with $\left(E', E_{ss}', d, \eta^{-1}\right)$.

Using this fact, Achter and Howe \cite{AH17} converted the problem of counting such abelian surfaces over $\Fn$ into counting the size of the isogeny classes of $E$ and $E_{ss}$ over $\Fn$ and then for each divisor $d$ of the difference of traces count the number of such anti-isometries.  

We have already seen that the number of possible $E_{ss}'$ is $h(-p)$ and the number of possible $E'$ is the Kronecker class number of $\Delta_n$. Suppose $q^n$ is not a square. By \cite[Table 1]{AH17}, when $p \ne 2,3$ the trace of $E_{ss}$ is equal to $0$. So the difference of traces is $t_n$.

The Weil pairing yields a map $$
m: \operatorname{Isom}^1(E[n], F[n]) \rightarrow \operatorname{Aut} \boldsymbol{\mu}_n \cong(\mathbb{Z} / n \mathbb{Z})^{\times} .
$$
For every $i \in(\mathbb{Z} / n \mathbb{Z})^{\times}$, let $\operatorname{Isom}^i(E[n], F[n])$ be the set $m^{-1}(i)$, so that $$\operatorname{Isom}^1(E[n], F[n])$$ consists of the group scheme isomorphisms that respect the Weil pairing, and $\operatorname{Isom}^{-1}(E[n], F[n])$ consists of the anti-isometries from $E[n]$ to itself. For a prime power $\ell^k$ coprime to $q$, we say that two elliptic curves $E$ and $F$ on $\mathbb{F}_{q^n}$ are \textit{of the same symplectic type modulo $\ell^k$} if there is an isomorphism $E[\ell^k] \rightarrow F[\ell^k]$ of group schemes that respects the Weil pairing. In other words, $\operatorname{Isom}^1(E[\ell^k], F[\ell^k])$ is nonepmty.

Let $\ell$ be an odd prime divisor of $t_n$ and let $k$ be a positive integer such that $\ell^k \mid t_n$ and $\ell^{k+1} \nmid t_n.$ Since $E$ is ordinary, $\ell$ is coprime to $q$. We have $$0 \equiv t_n \not \equiv 4 q \bmod \ell^k$$ 
Therefore, $m: \operatorname{Aut}(E[\ell^k]) \rightarrow \text { Aut } \mu_{\ell^k}$ is surjective, so there is an isometry between $E[\ell^k]$ and $F[\ell^k]$ for any two elliptic curves in the isogeny classes of $E$ and $E_{ss}$. Moreover, denote by $t(E)$ the trace of an elliptic curve $E$, all elliptic curves $E / \mathbb{F}_{q^n}$ with $t(E) \equiv t_n \equiv 0 \bmod \ell$ are of the same symplectic type modulo $\ell^k$.

\begin{lem}
   Suppose $-1$ is a square modulo $t_n$. Then there are at least $o(\varphi(t_n))$ elements of $\operatorname{Isom}^{-1}(E'[\ell], E'_{ss}[\ell])$.
\end{lem}
\begin{proof}
    
We have shown that for any largest odd prime power $\ell^k$ that divides $t_n$ the set $\operatorname{Isom}^1(E'[\ell^k], E_{ss}'[\ell^k])$ is nonempty, i.e., there is an isometry $$\eta_{\ell^k}:  E'[\ell^k] \rightarrow  E_{ss}'[\ell^k].$$ Therefore, we have an isometry $\eta: E'[t_n] \rightarrow E'_{ss}[t_n]$. Let $b$ be an integer such that $b^2 \equiv-1 \bmod t_n$. Then $b \eta$ is an anti-isometry, so $$\operatorname{Isom}^{-1}(E'[t_n], E_{ss}'[t_n])$$ is nonempty. By \cite[Proposition 4.3 ]{AH17}, $$\#\operatorname{Aut}E'[t_n] \geq g^2\varphi(t_n)^2$$ where $g$ is either $1$ or $2$ because the divisors $d$ we consider here are coprime to $\Delta_n$. So  $\# \operatorname{Isom}^1(E'[t_n], E'[t_n])$ is at least $o(\varphi(t_n))$, and it follows that there are at least this many elements of $\operatorname{Isom}^{-1}(E'[t_n], E_{ss}'[t_n])$.
\end{proof}

Putting it all together, we get a lower bound of the form (up to logarithmic factors)
$$h(-p)H(\Delta_n)\varphi(t_n). $$

 Bombieri and Katz \cite{BK10} showed that the Frobenius trace of an ordinary elliptic curve over $\Fn$ has lower bound $$ |t_n| \ge (\frac{2}{\pi})\cdot q^{n/2-2^{37}\log (2n)}$$ and Hasse showed that $|t_n| \leq 2 \sqrt{q^n} .$ According to Achter and Howe \cite[Lemma 4.4]{AH17}, the Kronecker class number $H(\Delta_n)$ grows like $\sqrt{\left|t_n^2-4 q^n\right|} \sim 2 \sqrt{q^n}$ up to (at worst) logarithmic factors.
Therefore, the lower bound is in the desired form
$$O(q^{n(1+o(1))}).$$

\begin{appendices}
\section{Upper bound for almost ordinary split surfaces}

\subsection{$q^n$ is not a square}
Given $(E', E_{ss}', d\mid t_n)$ as in the previous context, Achter and Howe give an upper bound on the number of anti-isometries from $E'[d]$ to $E'_{ss}[d]$: 
\begin{lem}{\cite[Lemma 6.4]{AH17}} 
    Fix an elliptic curve $E / \mathbb{F}_{q^n}$ and a stratum $\mathcal{S}$ of elliptic curves over $\mathbb{F}_{q^n}$. Let $n$ be a positive integer. Suppose that either $\mathcal{S}$ is ordinary, or that a $(\mathcal{S})=0$ and $q$ is a nonsquare. If $\operatorname{Isom}^{-1}(E, \mathcal{S}, n)$ is nonempty then $\operatorname{gcd}\left(n, \mathfrak{f}_{\text {rel }}(E)\right)=\operatorname{gcd}\left(n, \mathfrak{f}_{\text {rel }}(\mathcal{S})\right)$, and we have

$$
\# \operatorname{Isom}^{-1}(E, \mathcal{S}, n) \leq 2 \psi(n) h\left(\mathcal{O}_{\mathcal{S}}\right) \operatorname{gcd}\left(n, \mathfrak{f}_{\mathrm{rel}}(E)\right) \operatorname{gcd}\left(n, \mathfrak{f}_{\mathrm{rel}}(\mathcal{S})\right)
$$

In particular, if $\mathfrak{f}_{\text {rel }}(E) \neq 0$, then

$$
\# \operatorname{Isom}^{-1}(E, \mathcal{S}, n) \leq 2 \psi(n) h\left(\mathcal{O}_{\mathcal{S}}\right) \mathfrak{f}_{\mathrm{rel}}(E) \mathfrak{f}_{\mathrm{rel}}(\mathcal{S}).
$$
Here $\mathfrak{f}_{\mathrm{rel}}(E)$ is the relative conductor of $E$ defined in \cite[Section 4]{AH17}.\end{lem}
In the present case, the relative conductors are either $1$ or $2$ due to the values of $d$ we consider are coprime to $\Delta_n.$ The lemma leads to an upper bound of the form
$$h(-p)H(\Delta_n)\varphi(t_n) $$
These estimates of size of $|t_n|$ and $H(\Delta_n)$ yields the upper bound $$O(q^{n(1+o(1))}).$$

\subsection{$q^n$ is a square}
Using the techniques in \cite{AH17}, we obtain a proof of the upper bound when $q^n$ is a square.
In this case, the Frobenius acts as a constant on $E_{ss}'[d]$ for $p \nmid d $ whose trace is either $2q^{n/2}$ or $-2q^{n/2}$. 

The difference $a(E')-a(E_{ss}')$ is either $r_n=t_n-2q^{n/2}$ or $s_n=t_n+2q^{n/2}$. Let $d$ be a divisor of $r_n$ or $s_n$. The set $\operatorname{Isom}^{-1}(E'[d], E_{ss}'[d])$ is nonempty if and only if the Frobenius action on $E'[d]$ is also a constant. This implies the discriminant of $\End(E')$ divides $r_n/d^2$ or $s_n/d^2.$  This inspires us to study the largest square factor of $r_n$ and $s_n.$

First we notice that 
Let $\Delta_1$ be the fundamental discriminant of $\mathcal{O}_K$. We can write
$$\Delta_n=\mathfrak{f}_n^2 \Delta_1$$
 where $ \mathfrak{f}_n$ is the conductor. Let $\mathfrak{f}_n=R_n S_n$ be the factorization of the conductor so that the square part of $r_n$ is $R_n^2$ and the square part of $s_n$ is $S_n^2$ up to a possible factor of $\operatorname{gcd}(r_n,s_n) $ which divides $4$.

By Lemma \ref{counting max iso planes}, the maximal number of possible anti-isometries from $E'[d]$ to $ E_{ss}'[d]$ is $O(d^3),$ so an upper bound for the total number of $(E', E_{ss}', d,\eta)$ is $$\sum_{d \mid R_n}d^3H(\Delta_n/d^2)+ \sum_{d \mid S_n}d^3H(\Delta_n/d^2)$$ and is $$(R^2_n +S_n^2)H(\Delta_n) $$ up to logarithmic factors. So the number we are counting is essentially $$(|r_n|+|s_n|)H(\Delta_n)=4q^{n/2}H(\Delta_n)$$ which yields the desired upper bound.

Finally, we remark that if the number of anti-isometries from $E'[d]$ to $ E_{ss}'[d]$ is on the order of $d^3$, then the argument above would give another proof of the lower bound in Theorem \ref{main}.
 \end{appendices}
\subsection*{Acknowledgements}
The author wishes to thank Ananth Shankar for encouraging her to think about this question and also for helpful conversations throughout this project. We are very grateful to Tonghai Yang for many helpful discussions.

Y.F. is partially supported by the NSF grant DMS$-2100436$.

\printbibliography
\addcontentsline{toc}{chapter}{Bibliography}

\end{document}